\newtheorem{theorem}{Theorem}[section]
\newtheorem{proposition}[theorem]{Proposition}
\newtheorem{lemma}[theorem]{Lemma}
\newtheorem{corollary}[theorem]{Corollary}
\newtheorem{remark}[theorem]{Remark}
\numberwithin{equation}{section}
\numberwithin{figure}{section}
\newcommand{\e}{\varepsilon}
\renewcommand{\rho}{\varrho}
\renewcommand{\phi}{\varphi}
\newcommand{\IND}{{\bf 1}}
\DeclareMathOperator{\var}{Var}
\DeclareMathOperator{\cov}{Cov}
\DeclareMathOperator{\Ent}{Ent}
\newcommand{\grad}{\nabla}
\newcommand{\be}{\begin{equation}}
\newcommand{\cA}{\ensuremath{\mathcal A}} 
\newcommand{\cB}{\ensuremath{\mathcal B}}
\newcommand{\cE}{\ensuremath{\mathcal E}} 
\newcommand{\cF}{\ensuremath{\mathcal F}}
\newcommand{\cL}{\ensuremath{\mathcal L}}
\newcommand{\bbR}{{\ensuremath{\mathbb R}} }
\newcommand{\bbZ}{{\ensuremath{\mathbb Z}} }
\newcommand{\ent}{{\rm Ent} } 
\newcommand{\tc}{\, |\, }
\let\a=\alpha \let\b=\beta   \let\d=\delta  \let\e=\varepsilon
 \let\g=\gamma     \let\k=\kappa  \let\l=\lambda
\let\r=\rho      
\let\D=\Delta     \let\L=\Lambda 
\let\O=\Omega
\def\({\left(}
\def\){\right)}
\date{September 12, 2014}
\subjclass[2000]{60K35; 39B62; 82B20; 82C2.}
\keywords {Relative entropy, weakly dependent random variables,
logarithmic Sobolev inequality, spin system, heat bath, Gibbs sampler, Markov semigroup.} 
\thanks{The authors acknowledge the support of the Simons Institute of Theory of Computing, Berkeley, for hosting them during Fall 2013, where collaboration on this work first began. The authors also thank the American Institute of Mathematics, Palo Alto, for its hospitality. This research is also supported in part by Tetali's NSF grant DMS-1101447.}
\title{Approximate tensorization of entropy at high temperature}
\author{Pietro Caputo}
\address{Pietro Caputo \\
Universit\`a  Roma Tre. % Largo S. Murialdo 1, 00146 Roma, Italy. 
}
\email{caputo@mat.uniroma3.it}
\author{Georg Menz}
\address{Georg Menz \\ Stanford University}
\email{gmenz@stanford.edu}
\author{Prasad Tetali}
\address{Prasad Tetali \\ 
Georgia Institute of Technology}
\email{tetali@math.gatech.edu}
\begin{document}
\begin{abstract}
We show that for weakly dependent random variables the relative entropy functional satisfies an approximate version of the standard tensorization property which holds in the independent case. As a corollary we obtain a family of dimensionless logarithmic Sobolev inequalities. In the context of spin systems on a graph, the weak dependence requirements resemble  the well known Dobrushin uniqueness conditions. Our results can be considered as a discrete counterpart of a recent work of Katalin Marton \cite{Marton}. We also discuss some natural generalizations such as approximate Shearer estimates and subadditivity of entropy.

\end{abstract}

\maketitle

\thispagestyle{empty}
\section{Introduction}

Consider a product measurable space $(\O,\cF)$ of the form
\begin{align}\label{o_prod}
(\O,\cF) = (\O_1,\cF_1) \times \cdots\times (\O_N,\cF_N)\,,
\end{align}
where $(\O_k,\cF_k)$, $k=1,\dots,N$ are given measurable spaces. Let $\mu$ be 
a probability measure on $(\O,\cF)$. When $\mu$ is a product $\mu = \otimes_{k=1}^N \mu_k$, with $\mu_k$ a probability measure on $(\O_k,\cF_k)$, then  
it is well known (see e.g.\ \cite{Book_Toulouse}) that the entropy functional satisfies the inequality
\begin{align}\label{tensor}
  \Ent_{\mu} (f) \leq \sum_k  \mu\left[\Ent_{\mu_k} (f)\right],
\end{align}  
for all bounded measurable functions $f:\O\mapsto [0,\infty)$. Here, as usual, $$\Ent_{\mu} (f) = \mu\left[f\log f\right] - \mu\left[f\right]\log \mu\left[f\right],$$ which equals $\mu[f]$ times the  relative entropy of $\nu = (f/\mu[f])\mu$ with respect to $\mu$.
We refer to inequality \eqref{tensor} as the tensorization property of entropy. %It can be used to establish. 
In the general case where $\mu$ is not a product measure, we
define the probability measure $\mu_k$ by conditioning on all variables $x_j\in\O_j$, with $j\neq k$. Thus  $\mu_k[f]$ denotes the function given by \begin{align}\label{mukke}\mu_k[f](x) = 
\mu\left[f\,|\,x_j,\, j\neq k\right],\end{align}
and $\Ent_{\mu_k} (f)$ denotes the function $\mu_k[f\log f] - \mu_k\left[f\right]\log (\mu_k\left[f\right])$.
%If $f$ is a probability density relative to $\mu$, i.e.\ $f\geq 0$ and $ \mu[f]=1$, then $\mu_k[f]$ represents the marginal density on $x_j, j\neq k$ relative to $\mu$.  
We shall investigate the validity of an approximate tensorization statement of the form
\begin{align}\label{app_tensor}
\Ent_{\mu} (f) \leq C\sum_k  \mu\left[\Ent_{\mu_k} (f)\right],
\end{align}  
for all bounded measurable functions $f:\O\mapsto [0,\infty)$, where $C>0$ is a constant independent of $f$. We say that $\mu$ satisfies $AT(C)$ whenever \eqref{app_tensor} holds. 
%At this level it makes sense to allow possible dependence of $\rho$ on $\mu$, and in particular, on $N$. 
As we discuss below, if $\mu$ satisfies such a bound, then one can deduce entropy related functional inequalities such as log-Sobolev or modified log-Sobolev inequalities for the $N$-component systems as a consequence of the corresponding inequalities for each component. 

The idea that a system with weakly dependent components should display some kind of tensorization of entropy is implicitly at the heart of the large body of literature devoted to the proof of logarithmic Sobolev inequalities for spin systems satisfying Dobrushin's uniqueness conditions or more general spatial mixing conditions; see 
\cite{Zeg,StrZeg,Lu-Yau,MarOli,MarStFlour,GuiZeg,Cesi,PPP}. Perhaps surprisingly, none of these works addresses explicitly the validity of the statement \eqref{app_tensor}. 
%However, an application of the main argument in \cite{Cesi,PPP} shows that \eqref{app_tensor} holds for spin systems on the integer lattice $\bbZ^d$  
%under the so-called ``strong mixing" conditions; see Remark \ref{remzd} below. 
%This point of view was emphasized in \cite{Cesi,PPP}. However, none of these works addresses the validity of the statement \eqref{app_tensor}. 
Recently, the inequality \eqref{app_tensor} has been considered by Marton \cite{Marton} in the case of continuous spins, with $\O = \bbR^N$  and  $\mu$ an absolutely continuous measure of the form $\mu(dx)=e^{-V(x)}dx$. The author proves \eqref{app_tensor} under suitable weak dependence conditions that are formulated in terms of the Hessian of $V$. This allows her to derive the logarithmic Sobolev inequality beyond the usual Bakry-\'Emery criterion \cite{BakEme} or the more recent Otto-Reznikoff criterion \cite{OttoRez}. %The case of discrete spin systems remained unexplored. 

In this paper, we focus on deriving general sufficient conditions on $( \O,\cF,\mu)$ for inequality \eqref{app_tensor} to be satisfied. In particular, for spin systems with bounded local interactions, we shall establish that approximate tensorization holds as soon as the temperature 
is high enough, regardless of the single spin space and the underlying spatial structure. 

Next, we observe that the tensorization property \eqref{tensor} is a member of a much larger family of inequalities, often called Shearer inequalities, satisfied by product measures. In Section \ref{shearer1}
below we briefly discuss the problem of establishing approximate Shearer estimates for weakly dependent non-product measures. 

\subsection{Approximate tensorization and the Heat Bath chain}\label{sec:hb}
Before describing our results in detail, let us discuss some basic relations between approximate tensorization and functional inequalities for the 
%A linearization of inequality \eqref{app_tensor} 
%yields the inequality 
% \begin{align}\label{gap_tensor}
%\var_{\mu} (g) \leq C\sum_k  \mu\left[\var_{\mu_k} (g)\right],
%\end{align}  
%for any bounded function $g:\O\mapsto\bbR$, where $\var_\mu(g) = \mu[g^2]-\mu[g]^2$ denotes the variance
%of $g$. To derive \eqref{gap_tensor} from \eqref{app_tensor} consider functions $f$ of the form $1+\e g$ with $g$ bounded and take the limit $\e\to 0$. Thus, approximate tensorization of entropy implies approximate tensorization of variance, with the same constant $C$. 
%The inequality \eqref{gap_tensor} is equivalent to 
%a Poincar\'e or spectral gap inequality for the 
{\em Heat Bath} Markov chain (also known as Glauber dynamics or Gibbs sampler). To define the latter, consider the operator $\cL$ given by 
\begin{align}\label{gibbs}
\cL f(x) = \sum_k (\mu_k[f](x) - f(x)), 
\end{align}  
where $f:\O\mapsto\bbR$. 
Then $\cL$ defines the infinitesimal generator of a continuous time Markov chain on $\O$, 
such that with rate $1$ independently each component $\O_k$, $k=1,\dots,N$ is updated by replacing $x_k$ with a value $x'_k$ sampled from the conditional distribution $ \mu\left[\cdot\,|\,x_j,\, j\neq k\right]$. The operator $\cL$ is a bounded self-adjoint operator in $L^2(\O,\mu)$ and the Markov chain is reversible with respect to $\mu$. We denote by $(e^{t\cL}, t\geq 0)$ the heat bath semigroup; see e.g.\ \cite{BookBaGeLe}. 
The Dirichlet form of the process is given by  
\begin{align}\label{dir_gibbs}
\cE(f,g)= \mu\left[f(-\cL g)\right] = \sum_k \mu\left[\cov_{\mu_k}(f,g)\right] ,
\end{align}  
where $f,g\in L^2(\O,\mu)$ and $\cov_{\mu_k}(f,g)$ denotes the covariance % function
$$
\cov_{\mu_k}(f,g) =  \mu_k[f g] - \mu_k[f] \mu_k[g].  
$$
The following inequalities are commonly studied in the Markov chain literature.
Say that $\mu$ satisfies $P(C)$, or the Poincar\'e inequality with constant $C$, if 
 \begin{align}\label{gap_tensor}
\var_{\mu} (g) \leq C\sum_k  \mu\left[\var_{\mu_k} (g)\right],
\end{align}  
for any bounded function $g:\O\mapsto\bbR$, where $\var_{\mu}(g) = \mu[g^2] - \mu[g]^2$ denotes the variance.
Moreover, say that $\mu$ satisfies $LS(C)$, or the Log-Sobolev inequality with constant $C$, if 
 \begin{align}\label{ls_tensor}
\Ent_{\mu} (f) \leq C\sum_k  \mu\left[\var_{\mu_k} (\sqrt f)\right],
\end{align}  
for all bounded measurable functions $f:\O\mapsto [0,\infty)$. Finally, say that  $\mu$ satisfies $MLS(C)$, or the Modified Log-Sobolev inequality with constant $C$, if 
 \begin{align}\label{mls_tensor}
\Ent_{\mu} (f) \leq C\sum_k  \mu\left[\cov_{\mu_k} (f,\log f)\right],
\end{align}  
for all bounded measurable functions $f:\O\mapsto [0,\infty)$. Modified log-Sobolev inequalities have received increasing attention in recent years 
\cite{PPP,GQ,Goel,CP,BobTet}, also in view of their connections with mixing times of Markov chains  \cite{MonTet}.

 It is well known that $P(C)$ is equivalent 
to exponential decay of the variance in the form $\var_\mu(f_t)\leq e^{-2t/C}\var_\mu(f)$, for all $f\in L^2(\O,\mu)$ and for all $t\geq 0$, where $f_t=e^{t\cL}f$. Similarly, $MLS(C)$  is equivalent 
to exponential decay of the entropy in the form 
\begin{align}\label{hbmlsidec}
\ent_\mu(f_t)\leq e^{-t/C}\ent_\mu(f),
\end{align}  
for all $t\geq 0$, for all functions bounded measurable $f\geq 0$, while $LS(C)$ is equivalent to a hypercontractivity property
of the heat bath semigroup; see \cite{DS}. The following implications are also well known \cite{DS}: for any $C>0$, $LS(C)\Rightarrow MLS(C/4) \Rightarrow P(C/2)$. The approximate tensorization property $AT(C)$ is naturally linked to the above inequalities as summarized below. 
\begin{proposition}\label{athb}
The following implications hold for any $C>0$:
$$
AT(C)\Rightarrow P(C)\,,\quad LS(C)\Rightarrow AT(C)\Rightarrow MLS(C) 
$$
\end{proposition}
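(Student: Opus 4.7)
My plan is to reduce the three implications to a pair of pointwise inequalities between one-variable functionals of $f$, plus a standard linearization argument for the Poincaré bound. The key fact is the chain
\[
\var_\nu(\sqrt{f}) \;\leq\; \Ent_\nu(f) \;\leq\; \cov_\nu(f,\log f),
\]
valid for every probability measure $\nu$ and every bounded measurable $f\geq 0$. The right-hand inequality is immediate from Jensen's inequality, since a short computation gives $\cov_\nu(f,\log f) - \Ent_\nu(f) = \nu[f]\bigl(\log \nu[f] - \nu[\log f]\bigr) \geq 0$. The left-hand inequality is classical; one way to see it is to apply the scalar bound $x\log x - x + 1 \geq (\sqrt{x}-1)^2$ (valid for $x\geq 0$) to $x = f/\nu[f]$, take $\nu$-expectation, and complete the square using $\bigl(\sqrt{\nu[f]} - \nu[\sqrt{f}]\bigr)^2 \geq 0$.

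Applying this chain pointwise (in the frozen variables $x_j$, $j\neq k$) to the conditional measure $\mu_k$ and then averaging against $\mu$ yields
\[
\sum_k \mu\bigl[\var_{\mu_k}(\sqrt{f})\bigr] \;\leq\; \sum_k \mu\bigl[\Ent_{\mu_k}(f)\bigr] \;\leq\; \sum_k \mu\bigl[\cov_{\mu_k}(f,\log f)\bigr].
\]
The implications $LS(C)\Rightarrow AT(C)$ and $AT(C)\Rightarrow MLS(C)$ now follow at once: any bound $\Ent_\mu(f) \leq C\cdot(\text{one of these sums})$ is automatically a bound by $C$ times any sum to its right in the chain, so the three defining inequalities \eqref{ls_tensor}, \eqref{app_tensor}, \eqref{mls_tensor} become successively weaker in that order.

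For the remaining implication $AT(C)\Rightarrow P(C)$, I would test $AT(C)$ on $f = 1 + \e g$ for a bounded function $g$ (with $|\e|$ small enough that $f \geq 0$) and expand to second order in $\e$. A short Taylor computation gives $\Ent_\nu(1 + \e g) = \tfrac{\e^2}{2}\var_\nu(g) + O(\e^3)$, uniformly in the choice of probability $\nu$; substituting this on both sides of \eqref{app_tensor}, dividing by $\e^2/2$, and letting $\e\to 0$ recovers \eqref{gap_tensor}. The only genuinely substantive step in the whole proposition is establishing the pointwise chain displayed above; everything else is bookkeeping.
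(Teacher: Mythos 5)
Your proof is correct and matches the paper's approach: for $AT(C)\Rightarrow P(C)$ the paper also linearizes around $f=1+\e g$, for $AT(C)\Rightarrow MLS(C)$ it also invokes Jensen to get $\ent_{\mu_k}(f)\leq\cov_{\mu_k}(f,\log f)$, and for $LS(C)\Rightarrow AT(C)$ it also cites the inequality $\var_\nu(\sqrt f)\leq\ent_\nu(f)$. Your write-up is slightly more explicit (you actually sketch the proof of $\var_\nu(\sqrt f)\leq\ent_\nu(f)$ via the scalar bound $x\log x-x+1\geq(\sqrt x-1)^2$ and the completed square, where the paper simply calls it ``well known''), but the decomposition into the same three elementary comparisons applied to the conditional measures $\mu_k$ is identical.
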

\begin{proof}
$AT(C)\Rightarrow P(C)$ follows by linearization: \eqref{gap_tensor} can be obtained  from \eqref{app_tensor} by considering functions $f$ of the form $1+\e g$ with $g$ bounded and taking the limit $\e\to 0$. In words, approximate tensorization of entropy implies approximate tensorization of variance, with the same constant $C$. To prove $AT(C)\Rightarrow MLS(C)$, observe that by Jensen's inequality, for all functions $f\geq 0$ and $k\in[N]$:
\begin{align}\label{hbk}
\ent _{\mu_k}(f)\leq \cov_{\mu_k}(f,\log f) .
\end{align}  
Finally, $LS(C)\Rightarrow AT(C)$ follows from the well known fact that $\var_{\mu}(\sqrt f)\leq \ent_\mu (f)$ for any probability measure $\mu$ and any bounded function $f\geq 0$.
\end{proof}

\section{Main results}
For simplicity of exposition we formulate our results in the case where each of the spaces $\O_k$ in \eqref{o_prod} is finite or at most countably infinite, but there is no difficulty in transferring the same proof e.g.\ to the case $\O=\bbR^N$. However, one should keep in mind that our main assumptions involve $L_\infty$ norms and therefore they are not ideally suited to deal with unbounded interactions.   

The 
weak dependence assumption is formulated as follows. For each $k\in[N] = \{1,\dots,N\}$, set
$\bar\O_k = \times_{j\in[N]:\,j\neq k}\O_j$ and write 
$\bar x_k\in\bar \O_k$ for the vector $(x_j,\,j\neq k)$. Similarly, for $i\neq k$, set
$\bar\O_{k,i} = \times_{j\in[N]:\,j\neq i, j\neq k}\O_j$ and write 
$\bar x_{k,i}\in\bar \O_{k,i}$ for the vector $(x_j,\,j\neq k, j\neq i)$.
For $x=(x_1,\dots,x_N)$ we write $x=(x_k,\bar x_k)$ % or $x=(x_i,x_k,\bar x_{k,i})$ 
and let
$$
\mu_k^{\bar x_k}(x_k)  = \mu(x_k\tc \bar x_k) = \frac{\mu(x_k,\bar x_k)}{\sum_{y_k\in\O_k}\mu(y_k,\bar x_k)}
$$
denote the conditional probability on $\O_k$, so that 
$$\mu_k[g](x)=\sum_{y_k\in \O_k} \mu_k^{\bar x_k}(y_k)g(y_k,\bar x_k),$$
for any  bounded $g:\O\mapsto\bbR$. 
For fixed $i\neq k$, consider the function $\phi_{i,k}:\O_i^2\times \O_k \times \bar\O_{k,i}\mapsto\bbR$ defined by 
 \begin{align}\label{couphi}
 \phi_{i,k}(x_i,y_i,x_k,\bar x_{i,k}) = \frac{\mu_k^{x_i,\bar x_{i,k}}(x_k)}{\mu_k^{y_i,\bar x_{i,k}}(x_k)}.
 \end{align}
Our main assumption is formulated in terms of the $\phi_{i,k}$ as follows. 
Define the coefficients
 \begin{gather}\label{coupal}
 \a_{i,k}= \sup_{x_i,y_i\in\O_i,x_k\in\O_k,\bar x_{i,k}\in\bar \O_{i,k} }\phi_{i,k}(x_i,y_i,x_k,\bar x_{i,k})\,,\quad 
 \\
 \d_{i,k}= \sup_{x_i,y_i\in\O_i,x_k,y_k\in\O_k,\bar x_{i,k}\in\bar \O_{i,k} }|\phi_{i,k}(x_i,y_i,x_k,\bar x_{i,k})-
 \phi_{i,k}(x_i,y_i,y_k,\bar x_{i,k})|.\nonumber 
 \end{gather}
Notice that if $\mu$ is a product measure then $\a_{i,k} = 1$ and $\d_{i,k}=0$ for all $i\neq k$. 
% We say that the measure $\mu$ satisfies assumption (H) if the following holds:
%if, given $0<p<1$, there exists $K=K(p)$ such that the following
%holds:
%\begin{itemize}
%\item[{\bf H}.] 

\begin{theorem}\label{mainth}
Suppose that the measure $\mu$ satisfies the ergodicity assumption \begin{align}\label{ergo}
\lim_{t\to\infty} \ent_\mu (f_t)  = 0\,, 
\end{align}
for every bounded measurable function $f:\O\mapsto\bbR_+$, where $f_t=e^{t\cL}f$, and $(e^{t\cL}, t\geq 0)$ is the heat bath semigroup. 
Assume that the coefficients $\{\a_{i,k},\d_{i,k}\}$ satisfy $\g+\k<1$ where 
$$
\g=\max_{i}\sum_{k:\,k\neq i}\big|\a_{i,k}\a_{k,i}- 1\big|\,,\quad 
\k=\frac14\max_{i}\sum_{k:\,k\neq i}  (\d_{k,i}+\d_{i,k}) \a_{i,k}\a_{k,i}. 
$$
Then the approximate tensorization \eqref{app_tensor} holds with $C= (1-\g-\k)^{-1}$.
\end{theorem}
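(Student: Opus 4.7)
The structure of the conclusion $C=(1-\gamma-\kappa)^{-1}$ strongly suggests reducing the theorem to a single self-referential estimate
\begin{equation*}
\Ent_\mu(f) \;\leq\; \sum_k \mu\!\left[\Ent_{\mu_k}(f)\right] + (\gamma+\kappa)\,\Ent_\mu(f),
\end{equation*}
which rearranges to the claim. The plan is to derive this by iterating the exact one-site identity
\begin{equation*}
\Ent_\mu(f) \;=\; \mu\!\left[\Ent_{\mu_k}(f)\right] + \Ent_\mu(\mu_k[f]),
\end{equation*}
valid for each $k$, and then controlling the residual $\sum_k \Ent_\mu(\mu_k[f])$ (which would vanish in the product case) by $(\gamma+\kappa)\,\Ent_\mu(f)$ using the weak-dependence coefficients.

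To execute the plan, I would first note that $\mu_k[f]$ depends only on $\bar x_k$, so $\Ent_\mu(\mu_k[f])$ is the entropy of a function on $\bar\Omega_k$ under the marginal $\mu_{\bar k}$. The key computation is a Lipschitz-type estimate in each coordinate $i\neq k$: using the definition of $\phi_{i,k}$ in \eqref{couphi}, one can rewrite $\log\bigl(\mu_k^{x_i,\bar x_{i,k}}[f]/\mu_k^{y_i,\bar x_{i,k}}[f]\bigr)$ and the corresponding discrete derivative of $\log\mu_k[f]$ as an integral of $\phi_{i,k}$ against $\mu_k^{\cdot,\bar x_{i,k}}$. A coupling between the conditional measures at $x_i$ and $y_i$, combined with the elementary inequalities that govern how entropy responds to a multiplicative perturbation bounded by $\phi_{i,k}$, then yields a bound on the cross-term in which $|\alpha_{i,k}\alpha_{k,i}-1|$ captures the deviation of $(\mu_k^{\cdot},\mu_i^{\cdot})$ from a product, and $\delta_{i,k}+\delta_{k,i}$ captures the additional variation of those conditionals with the updated coordinate. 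Summing over $i$ and taking a maximum over $k$, the two families of cross-terms aggregate to exactly $\gamma\,\Ent_\mu(f)$ and $\kappa\,\Ent_\mu(f)$ respectively.

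The main obstacle, I expect, is the bookkeeping that produces the factor $\tfrac{1}{4}$ in the definition of $\kappa$. This should emerge from applying the variance estimate $\var(X)\leq (b-a)^2/4$ for a random variable in $[a,b]$ to an auxiliary function whose oscillation is governed by $\delta_{i,k}$, paired symmetrically with $\delta_{k,i}$ via the product $\alpha_{i,k}\alpha_{k,i}$. A subsidiary technical point is that one must combine the asymmetric estimates at sites $i$ and $k$ in a symmetric fashion so that the supremum over $i$ in the definitions of $\gamma,\kappa$ is genuine, and not $\sum_i\sum_k$; this is where the ergodicity assumption \eqref{ergo} is useful, since it legitimizes passing to limits and justifies treating all sites on equal footing in the averaging. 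Once the pseudo-tensorization inequality is in place, the conclusion follows immediately by solving for $\Ent_\mu(f)$ under the hypothesis $\gamma+\kappa<1$.
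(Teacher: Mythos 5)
The target inequality you identify---a pseudo-tensorization $\Ent_\mu(f)\le\sum_k\mu[\Ent_{\mu_k}(f)]+(\gamma+\kappa)\Ent_\mu(f)$---does indeed rearrange to the conclusion, but the route you propose to it is blocked at the very first step. Your plan rests on the claim that the ``residual'' $\sum_k\Ent_\mu(\mu_k[f])$ in the one-site decomposition $\Ent_\mu(f)=\mu[\Ent_{\mu_k}(f)]+\Ent_\mu(\mu_k[f])$ vanishes when $\mu$ is a product. That is false: $\mu_k[f]$ is, up to normalization, the density of the marginal of $f\mu$ on $\bar x_k$, and $\Ent_\mu(\mu_k[f])$ is a genuine entropy of an $(N-1)$-coordinate function. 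In the product case it is generically nonzero, and in fact summing the one-site identities over $k$ yields $N\Ent_\mu(f)=\sum_k\mu[\Ent_{\mu_k}(f)]+\sum_k\Ent_\mu(\mu_k[f])$, so by exact tensorization the residual sum is as large as $(N-1)\Ent_\mu(f)$; it cannot be absorbed into a factor $\gamma+\kappa<1$. What you would actually have to show via this route is that $\sum_k\Ent_\mu(\mu_k[f])\le(N-1+\gamma+\kappa)\Ent_\mu(f)$, i.e.\ an approximate Shearer/subadditivity bound, which is not addressed by your coupling argument and is in fact flagged in the paper (Section 1.4) as a separate and harder problem.

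The paper proceeds by an entirely different, semigroup mechanism, and your guesses about the roles of the hypotheses do not match it. The ergodicity assumption \eqref{ergo} is not about ``treating sites on equal footing''; it is used in Lemma~\ref{derivate} to justify the representations $\Ent_\mu(f)=\int_0^\infty\cE(f_t,\log f_t)\,dt$ and $\mu[\Ent_{\mu_k}(f)]=\int_0^\infty\cE(f_t,\log(f_t/\mu_k[f_t]))\,dt$, after which the theorem reduces to proving the single pointwise (in $f$) Dirichlet-form inequality $\sum_k\cE(f,\log(f/\mu_k[f]))\ge(1-\gamma-\kappa)\,\cE(f,\log f)$. The cross terms $\grad_i\log\mu_k[f]$ are then estimated in Proposition~\ref{coro3} via the logarithmic mean $\L(a,b)$, producing a principal term that produces the $\gamma$-cancellation and a covariance remainder handled by Lemma~\ref{covlemma}. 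The factor $\tfrac14$ in $\kappa$ does not come from the Popoviciu variance bound $\var(X)\le(b-a)^2/4$ as you surmise; it is the product of the $\tfrac12$ in the symmetric covariance representation $\cov(g,\psi)=\tfrac12\sum\gamma(x,y)(g(y)-g(x))(\psi(y)-\psi(x))$ and the $\tfrac12$ from the AM--GM step $ab\le\tfrac12 a^2+\tfrac12 b^2$ used to split the $B$-term symmetrically between sites $i$ and $k$. So the identification of the constants, the use of the ergodicity hypothesis, and the mechanism producing $\tfrac14$ all differ from what the paper does.
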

The proof of Theorem~\ref{mainth} is given in Section~\ref{sec:pfth}.
As in Marton's paper \cite{Marton}, the proof follows the semigroup approach. An important difference in our argument %with respect to \cite{Marton} 
is that we use the heat bath semigroup where Marton uses the Langevin diffusion. While the overall strategy of the proof is similar, our setting requires a different technique because of the lack of differential calculus. Moreover, in contrast with \cite{Marton}, we do not need to require a one-site log-Sobolev inequality in our assumptions. This allows us to establish the approximate tensorization for the invariant measure of  Markov chains 
without log-Sobolev inequality, or even without Poincar\'e inequality, see the comments after Corollary \ref{coro_coro} below.

\subsection{Applications}\label{apps}
Next, we discuss the implications of Theorem \ref{mainth} in specific examples. 
For ease of exposition we limit ourselves to probability measures of the following form. 
Let $\mu_0(x)$ denote a product measure on $\O$ of the form $\mu_0(x)=\prod_{i=1}^N\mu_{0,i}(x_i)$, where $\mu_{0,i}$ is a probability measure on $\O_i$ for each $i\in[N]$,  
and consider the probability measure $\mu$ on $\O$ given by 
 \begin{align}\label{intbado}
\mu(x) = \frac{\mu_0(x)\,e^{\,\b \,W(x)}}{Z}\,,\qquad W(x)=\frac12\sum_{i,j\in [N] } J_{i,j}\,w_{i,j}(x_i,x_j)\,,
\end{align}
where $Z$ is the normalizing factor, the coefficients $J_{i,j}\in\bbR$ are assumed to satisfy 
$J_{i,i}=0$, $J_{i,j}=J_{j,i}$, and we assume that the functions $w_{i,j}$ satisfy 
\begin{align}\label{intbadoo}
\|w_{i,j}\|_\infty = \sup_{x_i,x_j}|w_{i,j}(x_i,x_j)|<\infty.
\end{align}
Here $\b>0$ is a parameter, the inverse temperature,  measuring the strength of the interaction. At $\b=0$ there is no dependence and the inequality \eqref{app_tensor} holds with $C=1$.  
Notice that the function $W$  in \eqref{intbado} is bounded in the sense that
$$
\|W\|_\infty\leq \frac12\sum_{i,j\in [N] } |J_{i,j}|\,\|w_{i,j}\|_\infty <\infty,
$$
where the bound depends on $N$ in general.
We first observe that a simple perturbation argument can 
be applied to obtain approximate tensorization for any $\b>0$, with a constant $C$ depending on $\|W\|_\infty$ and $\b$. 
\begin{lemma}\label{simple}
Under the assumption \eqref{intbadoo} the measure $\mu$ in \eqref{intbado} satisfies the approximate tensorization \eqref{app_tensor} with $C=e^{6\b\|W\|_\infty}$. In particular, \eqref{hbmlsidec} holds with the same $C$.
\end{lemma}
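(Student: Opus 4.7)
The plan is to run a Holley--Stroock-style perturbation argument that reduces approximate tensorization for $\mu$ to exact tensorization for the reference product measure $\mu_0$. The workhorse will be the elementary variational identity
$$\Ent_\nu(g)=\inf_{t>0}\nu\bigl[g\log(g/t)-g+t\bigr],$$
valid for any probability measure $\nu$ and any bounded $g\geq 0$. Since the integrand $G_t(u)=u\log(u/t)-u+t$ is nonnegative, this identity immediately yields the following key perturbation fact: if $\nu_1,\nu_2$ are probability measures with $d\nu_1/d\nu_2\leq A$ pointwise, then $\Ent_{\nu_1}(g)\leq A\,\Ent_{\nu_2}(g)$ for every bounded $g\geq 0$, simply by applying $\nu_1[G_t(g)]\leq A\,\nu_2[G_t(g)]$ inside the infimum.

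Setting $M:=\|W\|_\infty$, the uniform bound $|W|\leq M$ gives $Z\in[e^{-\beta M},e^{\beta M}]$, hence
$$e^{-2\beta M}\leq \frac{d\mu}{d\mu_0}(x)=\frac{e^{\beta W(x)}}{Z}\leq e^{2\beta M},$$
and the same bound on $W$ yields, for every $k$ and every $\bar x_k$,
$$\frac{d\mu_k^{\bar x_k}}{d\mu_{0,k}}(x_k)=\frac{e^{\beta W(x_k,\bar x_k)}}{\sum_{y_k}\mu_{0,k}(y_k)\,e^{\beta W(y_k,\bar x_k)}}\in[e^{-2\beta M},e^{2\beta M}].$$

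I would then chain four inequalities, each contributing a factor at most $e^{2\beta M}$: (i) the perturbation fact applied to the full measure gives $\Ent_\mu(f)\leq e^{2\beta M}\Ent_{\mu_0}(f)$; (ii) the exact tensorization \eqref{tensor} for the product measure $\mu_0$ gives $\Ent_{\mu_0}(f)\leq\sum_k \mu_0[\Ent_{\mu_{0,k}}(f)]$; (iii) the perturbation fact applied fiberwise in $x_k$, using that $d\mu_{0,k}/d\mu_k^{\bar x_k}\leq e^{2\beta M}$, gives $\Ent_{\mu_{0,k}}(f)\leq e^{2\beta M}\Ent_{\mu_k}(f)$ as functions of $\bar x_k$; (iv) the bound $d\mu_0/d\mu\leq e^{2\beta M}$ together with $\Ent_{\mu_k}(f)\geq 0$ gives $\mu_0[\Ent_{\mu_k}(f)]\leq e^{2\beta M}\mu[\Ent_{\mu_k}(f)]$. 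Multiplying the four factors produces exactly $e^{6\beta M}$, so
$$\Ent_\mu(f)\leq e^{6\beta\|W\|_\infty}\sum_k\mu[\Ent_{\mu_k}(f)],$$
which is \eqref{app_tensor} with $C=e^{6\beta\|W\|_\infty}$. The exponential entropy decay \eqref{hbmlsidec} with the same $C$ is then immediate from Proposition~\ref{athb}, since $AT(C)\Rightarrow MLS(C)$ and $MLS(C)$ is equivalent to \eqref{hbmlsidec}.

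There is no serious obstacle here: the whole argument is elementary, and notably it requires neither a log-Sobolev nor a Poincar\'e inequality on the single-site spaces. The only mild bookkeeping point is to verify that the exponents of $\beta M$ accumulating through the four steps really sum to $6$ and that the direction of the perturbation inequality is the correct one in each of the four steps.
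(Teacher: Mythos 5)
Your proof is correct and follows essentially the same route as the paper: three Holley--Stroock perturbation steps (each contributing a factor $e^{2\beta\|W\|_\infty}$) sandwiched around the exact tensorization \eqref{tensor} for the product reference measure $\mu_0$, followed by an appeal to Proposition~\ref{athb} for the entropy-decay consequence. The only cosmetic difference is that you spell out the one-sided perturbation fact via the variational identity for $\Ent$, where the paper simply cites \cite{HS}.
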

\begin{proof}
One has 
$$
\ent_\mu(f)\leq  e^{2\b\|W\|_\infty}\ent_{\mu_0}(f) 
\leq  e^{2\b\|W\|_\infty} \sum_k  \mu_0\left[\Ent_{\mu_{0,k} }(f)\right],
$$
where the first bound follows from the Holley-Stroock  perturbation argument \cite{HS}, while the second one is \eqref{tensor}. One more application of the Holley-Stroock perturbation estimate yields $\Ent_{\mu_{0,k} }(f)\leq e^{2\b\|W\|_\infty}\Ent_{\mu_{k} }(f)$, and using $\mu_0(x)\leq e^{2\b\|W\|_\infty}\mu(x)$ one obtains the claim.
\end{proof}

The above lemma, using Proposition \ref{athb} and the estimate in \eqref{hbmlsidec}, shows in particular that the ergodicity assumption \eqref{ergo} is always satisfied
in this setting. However, it represents a very poor estimate unless $\|W\|_\infty$ does not depend on $N$. Below, we consider 
cases where the function $W$ is not bounded uniformly in $N$, including systems, such as the Ising model,  where  
a phase transition can occur by varying the parameter $\b$. The main corollary of Theorem \ref{mainth} is summarized as follows. 
\begin{corollary}\label{coro_coro}
Define $\e_{i,k}= 4\b | J_{k,i}|\|w_{i,k}\|_\infty$ and assume 
\begin{align}\label{hyp_coro}
q:=\max_i\sum_{k:\,k\neq i}e^{\e_{i,k}}(e^{2\e_{i,k}}-1) < \frac23.
 \end{align}
 Then, the measure $\mu$ in \eqref{intbado} 
 has the approximate tensorization \eqref{app_tensor} with $C = (1-\frac32 q)^{-1}$. 
 \end{corollary}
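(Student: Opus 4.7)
The plan is to apply Theorem \ref{mainth} by estimating the coefficients $\a_{i,k}$ and $\d_{i,k}$ in terms of $\e_{i,k}$ and then verifying $\g+\k<1$. The ergodicity hypothesis \eqref{ergo} is automatic in this setting, since Lemma \ref{simple} gives $AT(C_0)$ with $C_0=e^{6\b\|W\|_\infty}$, hence $MLS(C_0)$ via Proposition \ref{athb}, hence exponential decay \eqref{hbmlsidec} of the entropy. So only pointwise estimates on the $\phi_{i,k}$ need attention. Using the product structure of $\mu_0$ and the pair form of $W$, the conditional measure has the explicit Gibbs form $\mu_k^{\bar x_k}(x_k)\propto\mu_{0,k}(x_k)\,e^{\b V_k(x_k;\bar x_k)}$, where $V_k(x_k;\bar x_k)$ collects all pair interaction terms involving site $k$. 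Substituting into \eqref{couphi} gives
$$
\phi_{i,k}(x_i,y_i,x_k,\bar x_{i,k})=e^{\b\,a(x_k)}\,\frac{Z_k(y_i,\bar x_{i,k})}{Z_k(x_i,\bar x_{i,k})},\qquad a(x_k)=V_k(x_k;x_i,\bar x_{i,k})-V_k(x_k;y_i,\bar x_{i,k}),
$$
and $a$ only involves the single pair interaction $(k,i)$.

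By \eqref{intbadoo} one has $|\b\,a(\cdot)|\leq 2\b|J_{k,i}|\|w_{i,k}\|_\infty=\e_{i,k}/2$. A short calculation shows that the partition-function ratio equals $\mu_k^{x_i,\bar x_{i,k}}[e^{-\b a}]$, so it also lies in $[e^{-\e_{i,k}/2},e^{\e_{i,k}/2}]$. Multiplying the two contributions yields $\a_{i,k}\leq e^{\e_{i,k}}$. Moreover the $Z_k$-ratio is $x_k$-independent, and the oscillation of $e^{\b a(\cdot)}$ over $\O_k$ is bounded by $e^{\e_{i,k}/2}-e^{-\e_{i,k}/2}=e^{-\e_{i,k}/2}(e^{\e_{i,k}}-1)$; therefore $\d_{i,k}\leq e^{\e_{i,k}/2}\cdot e^{-\e_{i,k}/2}(e^{\e_{i,k}}-1)=e^{\e_{i,k}}-1$.

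Plugging these into the definitions of $\g$ and $\k$: by the symmetry of $J$ and of the pair interactions one has $\e_{i,k}=\e_{k,i}$, so $\a_{i,k}\a_{k,i}\leq e^{2\e_{i,k}}$ and $|\a_{i,k}\a_{k,i}-1|\leq e^{2\e_{i,k}}-1\leq e^{\e_{i,k}}(e^{2\e_{i,k}}-1)$. The elementary identity $(e^\e-1)e^{2\e}=e^{3\e}-e^{2\e}\leq e^{3\e}-e^\e=e^\e(e^{2\e}-1)$ gives $(\d_{i,k}+\d_{k,i})\a_{i,k}\a_{k,i}\leq 2e^{\e_{i,k}}(e^{2\e_{i,k}}-1)$. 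Summing over $k\neq i$ and taking the maximum, $\g\leq q$ and $\k\leq q/2$, so $\g+\k\leq\tfrac{3}{2}q<1$ under the hypothesis $q<2/3$, and Theorem \ref{mainth} yields \eqref{app_tensor} with $C\leq (1-\tfrac{3}{2}q)^{-1}$. The only real obstacle is bookkeeping: the relaxations $e^{2\e}-1\leq e^\e(e^{2\e}-1)$ and $(e^\e-1)e^{2\e}\leq e^\e(e^{2\e}-1)$ are exactly what makes $\g$ and $\k$ combine cleanly into the single quantity $q$ of \eqref{hyp_coro}.
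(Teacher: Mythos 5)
Your proof is correct and follows essentially the same route as the paper: apply Lemma \ref{simple} for ergodicity, use the explicit Gibbs form of $\mu_k^{\bar x_k}$ to bound $\a_{i,k}\leq e^{\e_{i,k}}$ and $\d_{i,k}$, then verify $\g+\k\leq\tfrac32 q<1$. The only variation is a slightly sharper intermediate bound on $\d_{i,k}$ — by exploiting that $\phi_{i,k}$ factors as an $x_k$-dependent exponential times an $x_k$-independent partition-function ratio you get $\d_{i,k}\leq e^{\e_{i,k}}-1$, whereas the paper just uses $\d_{i,k}\leq e^{\e_{i,k}}-e^{-\e_{i,k}}$; both relax to the same final $q$, so the constant $C=(1-\tfrac32 q)^{-1}$ is unchanged.
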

The proof of Corollary~\ref{coro_coro} is given in Section~\ref{sec:pfcor}. 
It is interesting to note that the estimate  of Corollary \ref{coro_coro} is uniform in the choice of the single probability distributions $\mu_{0,i}$ in \eqref{intbado}, since the smallness condition \eqref{hyp_coro} does not involve the single measures $\mu_{0,i}$. In particular, the single measures $\mu_{0,i}$ are not required to satisfy a Poincar\'e inequality or any other condition.  
Below, we discuss some specific applications of Corollary \ref{coro_coro}. For simplicity we limit ourselves to Glauber dynamics for discrete spin systems and interacting birth and death chains.

\subsection{Spin systems}\label{spins}
Consider the Ising model defined as follows. 
Let $\O= \{-1,+1\}^N$, and set
\begin{align}\label{gibbs_ising}
\mu(x) = \frac1Z\,
{\exp{\Big(\textstyle\frac12\b\sum_{i,j\in[N]} J_{i,j} x_ix_j + \sum_{i\in[N]} h_{i} x_i}\Big)}\,,
\end{align}  
where $Z$ is the normalizing factor, $\b>0$ is the inverse temperature,  
the couplings $J_{i,j}\in\bbR$ are assumed to satisfy $J_{i,j}=J_{j,i}$, and the $h_i\in\bbR$ are the so-called external fields. Since the external fields can be absorbed in the single measures $\mu_{0,i}$,  it is immediate to check that the above $\mu$ has the form \eqref{intbado} with $w_{i,j}(x_i,x_j)=x_ix_j$. 
 Therefore, uniformly in the external fields, the Ising model defined in \eqref{gibbs_ising}
 has the approximate tensorization \eqref{app_tensor} with $C = (1-\frac32 q)^{-1}$ as soon as \eqref{hyp_coro} holds.
% This is apparently the first example of the approximate tensorization property for a nontrivial spin system.   
% On the other hand, it follows from Corollary \ref{coro_coro}, that the heat bath dynamics satisfies the modified log-Sobolev inequality \eqref{hbmlsi}
% with constant $C= (1-\frac32 q)^{-1}$. The latter conclusion is somehow well known. It could have also been derived from previous works dealing with  similar high-temperature conditions, as a consequence of the stronger log-Sobolev inequality; see e.g.\ \cite{Zeg,Lu-Yau,MarOli,MarStFlour,GuiZeg}.  

%Let us consider some examples. 
A special case is the {\em ferromagnetic Ising model on a graph} $G=(V,E)$, $|V|=N$, which corresponds to the choice $J_{i,j}=\IND(\{i,j\}\in E)$.  In this case one can obtain the following explicit criterion. Let $\D = \max_i\sum_{k:\,k\neq i} |J_{ki}|$ denote the maximal degree of the graph. Using e.g.\ $e^{t}(e^{2t}-1)< 3t$ for $t< 1/5$ one finds that  
if $4\b< 1/5$, then $e^{\e_{i,k}}(e^{2\e_{i,k}}-1)< 12\b |J_{k,i}|$, so that \eqref{hyp_coro} is satisfied as soon as $\b\leq \b_0\D^{-1}$ with e.g.\ $\b_0=(18)^{-1}$. 
Another example is the mean field model or {\em Curie-Weiss model}, which corresponds to $J_{i,j} = \frac1N$ for all $i,j\in[N]$. In this case, reasoning as above one obtains that \eqref{hyp_coro} is satisfied as soon as e.g.\ $\b\leq \b_0=0.1$. The critical point of the Curie-Weiss model is at $\b=1$, and therefore it is well known that approximate tensorization cannot hold for $\b\geq 1$ since already the Poincar\'e inequality \eqref{gap_tensor} fails beyond this point; see  \cite{meanfieldIsing}. The above numbers $\b_0$ can be improved slightly by a more accurate analysis of the values of $\b$ which allow the estimate \eqref{hyp_coro}, but it is clear that they will generally be far from the optimal values.

The result of Corollary \ref{coro_coro} can actually be extended to a much larger class of spin systems, where the spin takes a finite number $s\geq 2$ of values. For example, letting $\O=\{1,\dots,s\}^N$ one may define the {\em Potts model} Gibbs measure
\begin{align}\label{gibbs_potts}
\mu(x) = \frac1Z\,
{\exp{\Big(\textstyle\frac12\b\sum_{i,j\in[N]} J_{i,j} \IND(x_i=x_j) + \sum_{i\in[N]} h_{i} x_i}\Big)}\,.
\end{align}  
With the same arguments of Corollary \ref{coro_coro}, one obtains, for example,  for the Potts model on a graph $G$ with maximal degree $\D$, that there exists $\b_0(s)>0$ such that the approximate tensorization \eqref{app_tensor} holds as soon as $\b\leq \b_0(s)\D^{-1}$ uniformly in the external fields. 

\begin{remark}\label{remzd}{\em 
We point out that in the case of spin systems on the lattice $\bbZ^d$, it is known that the Log-Sobolev inequality holds for the heat bath dynamics under so-called  ``strong spatial mixing" conditions; see \cite{StrZeg,MarOli,Cesi,PPP}. Moreover, it is known that these spatial mixing conditions can cover a larger region of the phase diagram than our Dobrushin condition \eqref{hyp_coro}; see \cite{MarOli}.  Our assumption  \eqref{hyp_coro} compares well with Zegarlinski's earlier result \cite{Zeg1}. 
Since $LS(C)$ implies $AT(C)$ by Proposition \ref{athb}, our results are weaker than already known estimates in these cases. The only interest here could be the very different nature of the proof. 
%The first proof of a Log-Sobolev inequality for Ising spins on $\bbZ^d$ under a Dobrushin-type condition goes back to \cite{Zeg1}. The values of $\b$ for which our results hold are comparable with those in \cite{Zeg1}.
}
\end{remark}

\subsection{Interacting birth and death chains}\label{rw}
Here we investigate some special cases of the general model \eqref{intbado}  with unbounded variables. 
To fix ideas, consider the case where $\O = \bbZ_+^N$, where $\bbZ_+=\{0,1,\dots,\}$. 
%Let $\mu_0(x)$ denote a product measure on $\O$ of the form $\mu_0(x)=\prod_{i=1}^N\mu_{0,i}(x_i)$
Consider the probability measure $\mu$ on $\O$ given by \eqref{intbado}, where the $\mu_{0,i}$ are fixed reference probability measures on $\O_i=\bbZ_+$ defined as follows. 
Let $\nu(n)$, $n\in\bbZ_+$ denote a  probability measure
such that
\begin{align}\label{logcon}
\nu(n)^2\geq \frac{n+1}{n}\nu(n+1)\nu(n-1)\,,\qquad n\geq 1.
\end{align} 
Such a measure is called ultra log-concave; see e.g.\ \cite{Johnson}. The basic example is the Poisson distribution with parameter $\l>0$, with $\nu(n)=e^{-\l}\l^n/n!$. Let $F_i:\bbZ_+\mapsto\bbR$ denote
arbitrary functions such that $F_\infty:=\max_i\|F_i\|_\infty<\infty$ and define the probability 
 \begin{align}\label{logcon1}
\mu_{0,i}(x_i) = \frac{\nu(x_i)e^{F_i(x_i)}}{Z},
\end{align} 
where $Z$ denotes the normalization. From \cite[Theorem 3.1]{CDaiPos} we know that 
for each $i$, $\mu_{0,i}$ satisfies the following modified log-Sobolev inequality in $\bbZ_+$:
 \begin{align}\label{logcon2}
\ent_{\mu_{0,i}}(f) \leq C_0
\,\mu_{0,i}\left[\partial_if\partial _i\!\!\,\log f\right]\,,
\end{align} 
with $C_0=e^{4F_\infty}{\nu(1)/\nu(0)}$, where $f:\bbZ_+\mapsto\bbR_+$ and $\partial_if(x_i) := f(x_i+1)-f(x_i)$, $x_i\in\bbZ_+$. 
The inequality \eqref{logcon2} expresses the exponential decay of entropy for the birth and death process with birth rate $b(n)=1$ and with death rate $d(n)= \IND(n\geq 1)\mu_{0,i}(n-1)/\mu_{0,i}(n)$, see \cite{CDaiPos,ChafJoul}. The gradient operator $\partial_i$ is extended to functions $f$ on $\O$ by setting $\partial_i f(x) = f(x+e_i)-f(x)$, $e_i$ denoting the unit vector in the $i$-th direction.
\begin{corollary}\label{coro_int}
Consider the measure $\mu$ given by \eqref{intbado} with $\mu_{0,i}$ as above.
Suppose the interaction term $\b\, W$  satisfies the assumption \eqref{hyp_coro}. Then $\mu$ satisfies \eqref{app_tensor} with $C= (1-\frac32 q)^{-1}$. Moreover, one has the modified log-Sobolev inequality 
\begin{align}\label{cor_int1}
\ent_{\mu}(f) \leq K
\,\sum_{i}\mu\left[\partial_if\partial _i\!\!\,\log f\right]\,,
 \end{align}
  with constant $K=CC_0e^{1/3}$, for all $f:\O\mapsto\bbR_+$.
 \end{corollary}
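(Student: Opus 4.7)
The first conclusion, approximate tensorization with constant $C=(1-\tfrac32 q)^{-1}$, is a direct invocation of Corollary~\ref{coro_coro}, since $\mu$ is of the required product-times-exponential form and the hypothesis \eqref{hyp_coro} is explicitly assumed. So the entire task reduces to combining this tensorization with a one-site modified log-Sobolev inequality for each conditional distribution $\mu_k^{\bar x_k}$ that is only mildly worse than the ``unperturbed'' constant $C_0$. More precisely, the plan is: identify $\mu_k^{\bar x_k}$ as a perturbation of an ultra log-concave reference by a bounded function, apply \cite[Theorem~3.1]{CDaiPos} pointwise in $\bar x_k$, and use the hypothesis \eqref{hyp_coro} to show the perturbation is small enough that the one-site MLS constant stays within a factor $e^{1/3}$ of $C_0$.

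For the conditional measure, collecting all terms in $W$ that depend on $x_k$ gives
\[
\mu_k^{\bar x_k}(x_k) \propto \nu(x_k)\,\exp\bigl(\tilde F_k(x_k)\bigr), \qquad \tilde F_k(x_k) = F_k(x_k) + g_{\bar x_k}(x_k),
\]
where $g_{\bar x_k}$ depends on the frozen environment $\bar x_k$ and satisfies
\[
\|g_{\bar x_k}\|_\infty \leq \b\sum_{j\neq k}|J_{k,j}|\,\|w_{k,j}\|_\infty = \tfrac14\sum_{j\neq k}\e_{k,j}.
\]
This is exactly a measure of the form \eqref{logcon1} with $F_k$ replaced by $\tilde F_k$, so \cite[Theorem~3.1]{CDaiPos} gives, pointwise in $\bar x_k$, the one-site MLS
\[
\ent_{\mu_k}(f) \leq C_k(\bar x_k)\,\mu_k\!\left[\partial_k f\,\partial_k\log f\right],\qquad C_k(\bar x_k)\leq \tfrac{\nu(1)}{\nu(0)}\,e^{4\|\tilde F_k\|_\infty}\leq C_0\,e^{4\|g_{\bar x_k}\|_\infty}.
\]

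The main step is matching \eqref{hyp_coro} to this perturbation bound. For $t\geq 0$ one has the elementary estimate $e^t(e^{2t}-1) = e^{3t}-e^t\geq 2t$ (matching values and first derivatives at $0$, plus convexity). Summing over $k\neq i$ and using \eqref{hyp_coro} yields $\sum_{k\neq i}\e_{i,k}\leq q/2 < 1/3$ uniformly in $i$, so in particular $4\|g_{\bar x_k}\|_\infty\leq \sum_{j\neq k}\e_{k,j}\leq 1/3$ and $C_k(\bar x_k)\leq C_0 e^{1/3}$ uniformly in $\bar x_k$. Substituting into the approximate tensorization supplied by Corollary~\ref{coro_coro},
\[
\ent_\mu(f)\leq C\sum_k\mu\!\left[\ent_{\mu_k}(f)\right]\leq C\,C_0\,e^{1/3}\sum_k\mu\!\left[\partial_k f\,\partial_k\log f\right],
\]
which is \eqref{cor_int1} with $K = C C_0 e^{1/3}$. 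Everything apart from the elementary convexity inequality in Step~2 is routine; that little inequality is the only place where the specific threshold $2/3$ in \eqref{hyp_coro} interacts with the numerical factor $4$ in the exponent of $C_0$, so it is really the one ingredient that needs to be noticed.
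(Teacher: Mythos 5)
Your proof is correct and follows essentially the same route as the paper: Corollary~\ref{coro_coro} for approximate tensorization, \cite[Theorem~3.1]{CDaiPos} for the one-site modified log-Sobolev inequality, and the elementary numerical bound $\sum_{j\ne k}\e_{j,k}\le q/2<1/3$ linking the perturbation cost to $q$. The only cosmetic difference is that you apply \cite[Theorem~3.1]{CDaiPos} directly to the conditional $\mu_k^{\bar x_k}$ by absorbing the frozen environment into $\tilde F_k$, whereas the paper passes through $\mu_{0,k}$ via two explicit Holley--Stroock comparisons (one on the entropy, one on the Dirichlet form); both accumulate the identical cost $e^{4\b\|\hat W_k\|_\infty}\le e^{1/3}$.
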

% Again, this appears to be the first example of approximate factorization of entropy in the context of interacting birth and death processes. On the other hand, t
The inequality \eqref{cor_int1}, which can be interpreted as the exponential decay of entropy for interacting birth and death processes, could have been established also by an extension of the discrete $\Gamma_2$ approach of \cite{CDaiPos}, see the recent paper \cite{DaiPos}; see also \cite{PPP} for an alternative approach. 
Let us remark that in contrast with the case of bounded spin systems it is essential here to consider the modified log-Sobolev inequality rather than the log-Sobolev inequality itself, since even the one-dimensional bound \eqref{logcon2} could fail if we replace the energy $\mu_{0,i}\left[\partial_if\partial _i\!\!\,\log f\right]$ by $\mu_{0,i}\left[(\partial _i\sqrt f)^2\right]$, as it is seen for example when $\nu$ is the Poisson distribution.
Finally, we point out that it would be desirable to prove a modified log-Sobolev inequality as in \eqref{cor_int1} under weaker assumptions than  \eqref{intbadoo}, in order to include unbounded interactions
of log-concave type, see \cite{CP} for some examples. This would be natural from a discrete $ \Gamma_2$ perspective; see \cite[Section 3.2]{DaiPos} where some progress in this direction was recently made in the case $N=2$.  
For continuous unbounded spins, the log-Sobolev inequality at high temperature, beyond the Bakry-\'Emery criterion, has been established in \cite{Zeg96,Yoshida,BodHel,OttoRez,Marton}.

\subsection{Approximate Shearer inequalities and subadditivity}\label{shearer1}
We conclude this introduction with some notes on possible extensions of the previous results.
Let $\cB$ be a cover of $[N]$, that is a family of subsets $B\subset[N]$ whose union equals $[N]$. 
Let $\deg_k(\cB)$ denote the degree of $k$ in $\cB$, that is the number of subsets $B\in\cB$ such that $B\ni k$, and set 
$$
n_-(\cB) = \min\{\deg_k(\cB)\,,\;k\in[N]\}\,,\quad n_+(\cB) = \max\{\deg_k(\cB)\,,\;k\in[N]\}.
$$ 
for the minimal and maximal degree, respectively. 
For any $B\subset [N]$, we write $$\mu_B=\mu(\cdot\tc x_j,\,j\in B^c),$$ for the conditional probability measure on $\O_i,\,i\in B$, obtained by conditioning $\mu$ on the value of all $x_j\in\O_j$, $j\notin B$. When $B=\{k\}$ for some $k$, then $\mu_B$ coincides with $\mu_k$ defined in \eqref{mukke}. Also, for any function $f$, we write $f_B = \mu[f\tc x_B]$, where $x_B= \{x_i,\;i\in B\}$. Note that, when $f$ is a probability density with respect to $\mu$, then $f_B$ is the density of the marginal of $f\mu$ on $x_B$ with respect to the marginal of $\mu $ on $x_B$. When $B=\{k\}$ we simply write $f_k$ for $f_{\{k\}}$.   
We recall that any probability measure $\mu$ satisfies the decomposition
 \begin{align}\label{decom}
  \ent_{\mu}(f)  =  \ent_{\mu}(f_B)  + \mu[\ent_{\mu_{B^c}}(f)].
\end{align}
We formulate the following version of Shearer-type  estimates.
%Any product measure enjoys the following properties. 
\begin{proposition}\label{gen_shear}
For any product measure $\mu = \otimes_{k=1}^N \mu_k$, for any cover $\cB$, for any bounded measurable $f\geq 0$, 
\begin{align}\label{oshea21}
 \ent_{\mu}(f) \leq \frac1{n_-(\cB)}\sum_{B\in\cB} \mu[\ent_{\mu_{B}}(f)]\,.
 \end{align}
\end{proposition}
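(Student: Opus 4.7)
The plan is to use the chain-rule identity \eqref{decom} to rewrite the claim as an upper bound on a sum of marginal relative entropies, and then derive that bound by combining the product structure of $\mu$ with the classical Shearer inequality for Shannon entropy. First I would normalize $\mu[f]=1$, so that $\nu:=f\mu$ is a probability measure, $\ent_\mu(f)=D(\nu\|\mu)$, and $\ent_\mu(f_A)=D(\nu_A\|\mu_A)$ for every $A\subseteq[N]$ (since $f_A$ depends only on $x_A$). Applying \eqref{decom} with $B$ and $B^c$ interchanged yields $\mu[\ent_{\mu_B}(f)]=\ent_\mu(f)-\ent_\mu(f_{B^c})$; summing over $B\in\cB$ reduces \eqref{oshea21} to the equivalent inequality
$$
\sum_{B\in\cB}\ent_\mu(f_{B^c})\;\leq\;(|\cB|-n_-(\cB))\,\ent_\mu(f).
$$
Setting $\cA=\{B^c:B\in\cB\}$, one has $\deg_k(\cA)=|\cB|-\deg_k(\cB)\leq |\cB|-n_-(\cB)$, so it suffices to establish the ``dual Shearer'' bound $\sum_{A\in\cA}D(\nu_A\|\mu_A)\leq n_+(\cA)\,D(\nu\|\mu)$, valid for any product $\mu$ and any finite family $\cA$ of subsets of $[N]$.

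To prove the dual bound, I would first reduce to the regular case $\deg_k(\cA)=r:=n_+(\cA)$ for every $k\in[N]$: for each $k$ with $\deg_k(\cA)<r$, adjoin $r-\deg_k(\cA)$ copies of the singleton $\{k\}$ to $\cA$, which increases the left side by the non-negative quantity $\sum_k(r-\deg_k(\cA))D(\nu_k\|\mu_k)$ and leaves $r$ unchanged. With $\deg_k=r$ for all $k$, the product structure $\log\mu_A=\sum_{k\in A}\log\mu_k$ gives $D(\nu_A\|\mu_A)=-H(\nu_A)+\sum_{k\in A}c_k$, where $c_k:=-E_\nu[\log\mu_k]\geq 0$, and the analogous identity holds for $A=[N]$. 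Summing over $A\in\cA$, the $c_k$-terms collapse into $r\sum_k c_k$, and after subtracting $r\,D(\nu\|\mu)$ I get
$$
r\,D(\nu\|\mu)-\sum_{A\in\cA}D(\nu_A\|\mu_A)\;=\;\sum_{A\in\cA}H(\nu_A)-r\,H(\nu)\;\geq\;0,
$$
where the final inequality is the classical Shearer lemma applied to the random vector with law $\nu$.

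The main obstacle is a sign/direction mismatch at the outset: the desired inequality bounds $\ent_\mu(f)$ from above by $\sum_B\mu[\ent_{\mu_B}(f)]$, whereas the natural subadditivity estimate for relative entropy against a product measure bounds $\sum_A D(\nu_A\|\mu_A)$ from above by $D(\nu\|\mu)$, i.e., in the opposite direction. The decomposition \eqref{decom} resolves this by converting the sought lower bound on $\sum_B\mu[\ent_{\mu_B}(f)]$ into an upper bound on $\sum_{B\in\cB}\ent_\mu(f_{B^c})$, after which the additive decomposition $\log\mu=\sum_k\log\mu_k$ strips away all non-entropic pieces and makes the remaining inequality exactly Shearer's classical lemma.
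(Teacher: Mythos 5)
Your proof is correct and follows essentially the same route as the paper's: you use \eqref{decom} to convert \eqref{oshea21} into the equivalent dual estimate $\sum_{A\in\bar\cB}\ent_\mu(f_A)\le n_+(\bar\cB)\,\ent_\mu(f)$, reduce to a uniform cover by adjoining singletons, and then exploit the product structure of $\mu$ to reduce the relative-entropy Shearer bound to the classical Shannon-entropy Shearer inequality. The only cosmetic difference is that you organize the reduction to Shannon entropy through the constants $c_k=-\nu[\log\mu_k]$ rather than through the paper's identity \eqref{shannon1}, but the underlying computation is identical.
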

This bound can be derived from the classical Shearer estimate for Shannon entropy \cite{MadTet,BolloBal}. For the reader's convenience we give a proof of Proposition \ref{gen_shear} along these lines in Section \ref{shearer2}. Note that using \eqref{decom} one has that 
\eqref{oshea21} is equivalent to the inequality
\begin{align}\label{oshea2}
  \sum_{A\in\bar \cB} \ent_{\mu}(f_A)\leq n_+(\bar \cB)\, \ent_{\mu}(f),
\end{align}
where $\bar\cB$ denotes the complementary cover $\{A=[N]\setminus B, \;B\in\cB\}$. 

It is immediate to check that the tensorization statement \eqref{tensor} is the special case
of \eqref{oshea21} corresponding to the singleton cover $\cB=\cB_1:=\{\{k\}\,,\;k\in[N]\}$. Similarly, for the complementary cover $\cB=\cB_{N-1}:=\bar \cB_1$, \eqref{oshea2} reduces to the well known subadditivity property
of entropy for product measures:
 \begin{align}\label{subadda}
\sum_{k}\ent_\mu(f_k)\leq \ent_\mu(f).
\end{align}
In the case of non-product measures with weak dependences,  
it is natural to investigate the validity of an approximate  form of Proposition \ref{gen_shear} such as 
\begin{align}\label{app_s}
 \ent_{\mu}(f) \leq \frac{C(\cB)}{n_-(\cB)}\sum_{B\in\cB} \mu[\ent_{\mu_{B}}(f)]\,,
 \end{align}
 where $C(\cB)>0$ is a suitable constant. 
 Note that, in analogy with Proposition \ref{athb}, approximate Shearer estimates are naturally linked to Log-Sobolev inequalities and exponential decay of entropy for the block version of the heat bath chain with infinitesimal generator given by 
$$
\cL^\cB = \sum_{B\in\cB }(\mu_B - 1).
$$
The following is an immediate corollary of 
our main result Theorem \ref{mainth}. 
\begin{corollary}\label{coro_shear}
Suppose $\mu$ satisfies the assumptions of Theorem \ref{mainth} and let $C>0$ be the constant appearing in that theorem. 
Then for any cover $\cB$,  setting $\D(\cB):=  \max\{|B|\,,\; B\in\cB\}$, 
\begin{align}\label{app_sheak}
 \ent_{\mu}(f) \leq C\,\frac{\D(\cB)}{n_-(\cB)}\sum_{B\in\cB} \mu[\ent_{\mu_{B}}(f)]\,.
 \end{align}
\end{corollary}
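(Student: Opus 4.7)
\textbf{Proof plan for Corollary~\ref{coro_shear}.}

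The plan is to combine the single-site approximate tensorization from Theorem~\ref{mainth} with a monotonicity property of conditional entropies, and then redistribute the resulting single-site terms across the cover $\cB$.

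First, I would establish the following monotonicity: for any $B\subset[N]$ and any $k\in B$,
\begin{align}\label{plan_mono}
\mu[\Ent_{\mu_k}(f)] \leq \mu[\Ent_{\mu_B}(f)].
\end{align}
This is a direct consequence of the chain rule~\eqref{decom} applied to the measure $\mu_B$ in place of $\mu$: conditioning $\mu_B$ further on the coordinates $\{x_j:\, j\in B\setminus\{k\}\}$ yields exactly $\mu_k$ (since $k\in B$ and $\mu_B$ already conditions on $x_{B^c}$). Thus
$$
\Ent_{\mu_B}(f) \;=\; \Ent_{\mu_B}\!\bigl(\mu_B[f\tc x_j,\,j\in B\setminus\{k\}]\bigr) + \mu_B[\Ent_{\mu_k}(f)] \;\geq\; \mu_B[\Ent_{\mu_k}(f)],
$$
and averaging with $\mu$ and using the tower property gives~\eqref{plan_mono}.

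Next, I would apply Theorem~\ref{mainth} to obtain $\Ent_\mu(f) \leq C\sum_k \mu[\Ent_{\mu_k}(f)]$, and then redistribute the single-site entropies through the cover. For each $k\in[N]$, average~\eqref{plan_mono} over the $\deg_k(\cB)$ sets $B\in\cB$ that contain $k$:
$$
\mu[\Ent_{\mu_k}(f)] \;\leq\; \frac{1}{\deg_k(\cB)}\sum_{B\in\cB:\,B\ni k}\mu[\Ent_{\mu_B}(f)].
$$
Summing over $k$ and exchanging the order of summation,
$$
\sum_{k} \mu[\Ent_{\mu_k}(f)] \;\leq\; \sum_{B\in\cB}\mu[\Ent_{\mu_B}(f)]\sum_{k\in B}\frac{1}{\deg_k(\cB)} \;\leq\; \frac{1}{n_-(\cB)}\sum_{B\in\cB}|B|\,\mu[\Ent_{\mu_B}(f)] \;\leq\; \frac{\D(\cB)}{n_-(\cB)}\sum_{B\in\cB}\mu[\Ent_{\mu_B}(f)].
$$
Combining with Theorem~\ref{mainth} yields~\eqref{app_sheak}.

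There is no real obstacle here: the argument reduces to the chain rule for entropy plus a double-counting identity. The only subtlety worth stating carefully is the direction of the monotonicity~\eqref{plan_mono}, where it is essential that further conditioning a measure can only decrease the $\mu$-average of its relative entropy.
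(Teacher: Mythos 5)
Your proof is correct and follows essentially the same route as the paper: apply Theorem~\ref{mainth}, use the monotonicity $\mu[\Ent_{\mu_k}(f)]\leq\mu[\Ent_{\mu_B}(f)]$ for $k\in B$ (which the paper states as immediate and you justify via the chain rule~\eqref{decom}), and conclude by a double-counting identity over the cover. The only difference is the order in which you interleave the monotonicity step with the $1=\sum_{B\ni k}\deg_k(\cB)^{-1}$ redistribution, which is inessential.
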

We remark that \eqref{app_sheak} is far from optimal if $\D(\cB)$ is large, and it becomes useless if the maximal cardinality of $\cB$ grows linearly in $N$. In particular, it cannot be used to prove an approximate subadditivity statement
(corresponding to $\cB=\cB_{N-1}$, $\D(\cB)=N-1$) of the form 
  \begin{align}\label{app_subadda}
\sum_{k}\ent_\mu(f_k)\leq C\,\ent_\mu(f),
\end{align}
with a nontrivial constant $C>0$. 
The approximate subadditivity estimate \eqref{app_subadda} has been obtained with the constant $C=2$ in \cite{CLL,CLL2} when $\mu$ is  the uniform measure on the $N$-dimensional sphere or the uniform measure on the symmetric group of permutations $S_N$. While the value $C=2$ is sharp for the sphere \cite{CLL}, it remains open to find the optimal value of $C$ for the symmetric group. We are not aware of any result of that kind for e.g.\ high-temperature Ising systems. 
%
%\what add Comments on the size of $C(k)$ as $k$ grows. Add comments on the case $k=N-1$ corresponding to approximate subadd. Mention works of Carlen, Lieb, Loss etc. 
%
%
%\what add Proposition on optimal value of $C(k)$ for linearized problem.
%\begin{proposition}\label{varshear}
%Suppose $\mu$ satisfies the assumptions of Theorem \ref{mainth}. For any $k\in[N]$, there exists 
%$C(k)>0$ such that 
%\begin{align}\label{varsheak}
% \var_{\mu}(f) \leq \frac{C(k)}{\binom{N-1}{k-1}}\sum_{B\in\cB_k} \mu[\var_{\mu_{B}}(f)]\,.
% \end{align}
%\end{proposition}
%

\section{Proof of Theorem \ref{mainth}}\label{sec:pfth}
Let $\cE(\cdot,\cdot)$ denote the Dirichlet form of the heat-bath chain discussed in Section \ref{sec:hb}.
Let $(e^{t\cL},\,t\geq 0)$ denote the heat-bath semigroup. For any bounded nonnegative function $f$ on $\O$, we write $f_t=e^{t\cL}f$ for its evolution at time $t$.
We need the following lemma. 
\begin{lemma}\label{derivate}
For every bounded $f\geq 0$, $k\in[N]$ one has 
\begin{align}\label{semi1}
\Ent_\mu(f) &= \int_0^\infty \cE(f_t,\log f_t)\, dt,
\\
\label{semi2}
\mu\left[\Ent_{\mu_k} (f)\right] &= \int_0^\infty \cE(f_t,\log (f_t/\mu_k[f_t]))\, dt.
\end{align}
\end{lemma}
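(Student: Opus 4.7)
The plan is to prove both identities by computing the time derivative of the relevant entropy functional along the heat bath semigroup and then integrating from $0$ to $\infty$, using the ergodicity hypothesis to handle the boundary term at infinity. The semigroup $(e^{t\cL})$ is generated by a bounded self-adjoint operator on $L^2(\O,\mu)$, so differentiating under the $\mu$-integral and exchanging sums/derivatives is routine; to deal with positivity of $\log f_t$ one can first assume $f\ge \e>0$ and recover the general bounded nonnegative case by monotone convergence.

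For \eqref{semi1}, since $\mu$ is invariant, $\mu[f_t]=\mu[f]$ for all $t$, hence $\mu[\cL f_t]=0$. Differentiating the definition of $\Ent_\mu$ and using $\frac{d}{dt}f_t=\cL f_t$ gives
$$
\frac{d}{dt}\Ent_\mu(f_t)=\mu\bigl[\cL f_t\,(\log f_t+1)\bigr]=\mu[\cL f_t\,\log f_t]=-\cE(f_t,\log f_t),
$$
where the last equality uses the symmetry of $\cL$ and the definition \eqref{dir_gibbs}. The ergodicity assumption \eqref{ergo} yields $\Ent_\mu(f_t)\to 0$ as $t\to\infty$, so integrating from $0$ to $\infty$ produces \eqref{semi1}.

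For \eqref{semi2} the key is the identity
$$
\mu[\Ent_{\mu_k}(f)] \;=\; \mu[f\log f]\,-\,\mu\bigl[\mu_k[f]\,\log\mu_k[f]\bigr],
$$
obtained by expanding the definition of $\Ent_{\mu_k}$ and applying the tower property to the cross term. Differentiating the first summand gives $\mu[\cL f_t\log f_t]$ as above. For the second summand, $\frac{d}{dt}\mu_k[f_t]=\mu_k[\cL f_t]$, and since $\log\mu_k[f_t]$ does not depend on $x_k$ the tower property gives $\mu\bigl[\mu_k[\cL f_t]\,\log\mu_k[f_t]\bigr]=\mu[\cL f_t\,\log\mu_k[f_t]]$; the additive term $\mu[\mu_k[\cL f_t]]=\mu[\cL f_t]=0$ drops out. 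Combining,
$$
\frac{d}{dt}\mu[\Ent_{\mu_k}(f_t)]=\mu\!\left[\cL f_t\,\log\frac{f_t}{\mu_k[f_t]}\right]=-\cE\!\left(f_t,\,\log\frac{f_t}{\mu_k[f_t]}\right).
$$
The boundary term at infinity vanishes because the decomposition $\Ent_\mu(f)=\Ent_\mu(\mu_k[f])+\mu[\Ent_{\mu_k}(f)]$ (the case $B=[N]\setminus\{k\}$ of \eqref{decom}) gives $\mu[\Ent_{\mu_k}(f_t)]\le \Ent_\mu(f_t)\to 0$. Integrating from $0$ to $\infty$ yields \eqref{semi2}.

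The main subtlety is the justification of the identities when $f$ (and thus potentially $f_t$) may vanish, so that $\log f_t$ is singular. I would handle this by a standard regularization: replace $f$ by $f+\e$, carry out the computation above (where all expressions are bounded and the derivatives are manifestly integrable because $\cL$ is a bounded operator and $\log(f_t+\e)$ is bounded), and then pass to the limit $\e\downarrow 0$ by monotone convergence on both sides, using the convention $0\log 0=0$.
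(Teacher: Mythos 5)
Your proof is correct and follows essentially the same route as the paper: differentiate the entropy functionals along the semigroup, use $\mu[\cL f_t]=0$ and the tower property to reduce to $-\cE(f_t,\cdot)$, and invoke the ergodicity hypothesis (together with the comparison $\mu[\Ent_{\mu_k}(f_t)]\le\Ent_\mu(f_t)$) for the vanishing boundary term at infinity. The only difference is cosmetic—you expand $\mu[\Ent_{\mu_k}(f)]$ before differentiating whereas the paper differentiates $\Ent_{\mu_k}(f_t)$ pointwise and then averages—and you add a regularization $f+\e$ to handle possible zeros of $f$, a detail the paper leaves implicit.
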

\begin{proof}
From the assumption \eqref{ergo} one has $\ent_\mu(f_t)\to 0$ as $t\to\infty$. Therefore, 
to prove \eqref{semi1} it is sufficient to observe that 
$$
\frac{d}{dt}\,\Ent_\mu(f_t) = \mu[\cL f_t \log f_t]=-\cE(f_t,\log f_t),
$$
where we use $\frac{d}{dt}f_t=\cL f_t$ and  $\mu[f_t] = \mu[f]$ for all $t\geq 0$.
%follows from 
%\begin{align}\label{semio1}
%\frac{d}{dt}\,\Ent_\mu(f_t) &= -\cE(f_t,\log f_t)\,.
%\end{align}
%To prove \eqref{semio1}, use $\frac{d}{dt}f_t=\cL f$ and  $\mu[f_t] = \mu[f]$ for all $t\geq 0$, so that
%$$
%\frac{d}{dt}\,\Ent_\mu(f_t) = \mu[\cL f_t \log f_t]=-\cE(f_t,\log f_t).
%$$
%This proves \eqref{semi1}.

From Jensen's inequality one has $0\leq \mu\left[\Ent_{\mu_k} (f_t)\right]\leq \ent_\mu(f_t)$ and the latter tends to zero as $t\to\infty$ by \eqref{ergo}. Therefore, \eqref{semi2} follows from 
\begin{align}\label{semio2}
\frac{d}{dt}\,\mu\left[\Ent_{\mu_k} (f_t)\right] &= - \cE(f_t,\log (f_t/\mu_k[f_t]))\,.
\end{align}
To prove \eqref{semio2}, notice that
$$
\frac{d}{dt}\,\Ent_{\mu_k}(f_t) = \mu_k[\cL f_t \log f_t] - \mu_k[\cL f_t] \log \mu_k[f_t].
$$
Taking expectation with respect to $\mu$  one has
$$
\frac{d}{dt}\,\mu\left[\Ent_{\mu_k} (f_t)\right] =\mu[\cL f_t \log f_t] - \mu[\cL f_t \log \mu_k[f_t]] = - \cE(f_t,\log (f_t/\mu_k[f_t])).
$$
\end{proof}
Thanks to Lemma \ref{derivate}, in order to prove \eqref{app_tensor}, it is sufficient to prove that for all $f\geq 0$:
\begin{align}\label{semi_tensor}
\sum_k\cE(f,\log (f/\mu_k[f]))\geq \rho\, \cE(f,\log f),
\end{align}  
with $\rho=C^{-1}>0$.
Writing \eqref{dir_gibbs} explicitly one has
\begin{align}\label{e_fisher_information}
  \cE(f,g)&=    \frac12  
 \sum_{i=1}^N   \sum_{x\in\O}\sum_{y_i\in\O_i}\mu(x)\mu^{\bar x_i}_i(y_i)\grad_i f(x;y_i) \grad_i g(x;y_i)\,,
\end{align}
using the notation 
\begin{equation}\label{nota_grad}
\grad_i f(x;y_i) :=f(\bar x_i,y_i)-f(x).
\end{equation}
Then \eqref{semi_tensor} becomes
\begin{align}\label{q_tensor}
\sum_{k,i=1}^N
&\sum_x\sum_{y_i}\mu(x)\mu^{\bar x_i}_i(y_i)\grad_i f(x;y_i) \grad_i \big(\log f - \log \mu_k [f]\big)(x;y_i)\nonumber\\&
\quad \geq \rho \sum_{i=1}^N   \sum_x\sum_{y_i}\mu(x)\mu^{\bar x_i}_i(y_i)\grad_i f(x;y_i) \grad_i \log f(x;y_i).
\end{align}  
Noting that $\grad_k \log \mu_k [f](x;y_i)=0$ for all $k$, the left hand side in \eqref{q_tensor} satisfies
\begin{align} 
\sum_{k,i}
&\sum_x\sum_{y_i}\mu(x)\mu^{\bar x_i}_i(y_i)\grad_i f(x;y_i) \grad_i \big(\log f - \log \mu_k [f]\big)(x;y_i) \\
& =
 \sum_{i}\sum_x\sum_{y_i}\mu(x)\mu^{\bar x_i}_i(y_i)\grad_i f(x;y_i) \grad_i \log f (x;y_i)
\\ & \;\;\;+  \sum_{k,i:\;k\neq i}
\sum_x\sum_{y_i}\mu(x)\mu^{\bar x_i}_i(y_i)\grad_i f(x;y_i) \grad_i \log f(x;y_i)\\ & \;\;\; - 
\sum_{k,i:\;k\neq i}
\sum_x\sum_{y_i}\mu(x)\mu^{\bar x_i}_i(y_i)\grad_i f(x;y_i) \grad_i \log \mu_k [f](x;y_i)\,.
\label{e_decomp_Gibbs_beginning}
\end{align}
Let us consider the three terms appearing in the right hand side of \eqref{e_decomp_Gibbs_beginning}.
The first term is exactly what we have in the right hand side of \eqref{q_tensor} apart from the constant $\rho$. 
The essence of the argument below is to show that the last term is approximately compensated by the term preceding it, the correction being proportional to the first term with a proportionality constant that can be made tiny in the regime of weak interaction, so that \eqref{q_tensor} holds with some positive $\r$.  This program starts with a pointwise estimate of the term $ \grad_i \log \mu_k [f]$. 

\subsection{Estimate of $\grad_i \log \mu_k [f]$}
As in several related works (see e.g.\ \cite{Zeg,StrZeg,Lu-Yau,MarOli}), 
estimating gradients of functions of $\mu_k [f]$ yields a principal term (which will be responsible for the compensation in \eqref{e_decomp_Gibbs_beginning} alluded to above) plus a covariance term that needs to be suitably small.  
A new ingredient here with respect to these works is the use of the logarithmic mean $\L(a,b)$, 
defined as usual by \begin{align}\label{logmean} 
\L(a,b)=\frac{a-b}{\log a - \log b},\qquad a,b\geq 0,\,a\neq b,
\end{align} 
and $\L(a,a)=a$, for all $a\geq 0$; see however \cite{ErbMaas} for a recent extensive use of the logarithmic mean in the analysis of entropy decay. 

Fix $i\neq k$ and $x_i,y_i\in\O_i,\bar x_{k,i}\in\bar \O_{k,i}$. Introduce the probability measure $\nu_{k,i}^{x_i,y_i,\bar x_{k,i}}$ on $\O_k$ defined by
\begin{align}\label{alab3} 
\nu_{k,i}^{x_i,y_i,\bar x_{k,i}} (x_k) =
\frac{ \L\big(f(y_i,x_k,\bar x_{k,i}),f(x_i,x_k,\bar x_{k,i})\big) \mu_k^{x_i,\bar x_{k,i}}(x_k)}{\bar \nu_{k,i}^{x_i,y_i,\bar x_{k,i}}},
 \end{align}
where the normalization $  \bar \nu_{k,i}^{x_i,y_i,\bar x_{k,i}}$ is given by
$$
\bar \nu_{k,i}^{x_i,y_i,\bar x_{k,i}}= \sum_{x_k} \L\big(f(y_i,x_k,\bar x_{k,i}),f(x_i,x_k,\bar x_{k,i})\big) \mu_k^{x_i,\bar x_{k,i}} (x_k).
$$
For simplicity, we omit the dependence on $f$ in the notation \eqref{alab3}. Recall the definition 
\eqref{couphi} of $\phi_{i,k}$. The goal of this subsection is to establish the following estimate.
 \begin{proposition}\label{coro3}
 For every $k\neq i$, for all $x\in\O,y_i\in\O_i$:
\begin{align}\label{alab30} 
&|\grad_i \log \mu_k [f](x;y_i)| \leq
\a_{i,k}\sum_{x'_k} |\grad_i \log f(\bar x_k,x'_k;y_i) | \nu_{k,i}^{x_i,y_i,\bar x_{k,i}}  (x'_k) +
\\
& \quad\quad  +\a_{i,k}
%\sqrt{u(x_i,y_i,\bar x_{k,i})}\,
\frac{\big|\cov_{\mu_k^{y_i,\bar x_{k,i}}}\big(f(y_i,\cdot,\bar x_{k,i}), \phi_{i,k}(x_i,y_i,\cdot,\bar x_{k,i})\big)\big|}{\big(\bar \nu_{k,i}^{x_i,y_i,\bar x_{k,i}}\sum_{x'_k} f(y_i,x'_k,\bar x_{k,i}) \mu_k^{y_i,\bar x_{k,i}}(x'_k)
\big)^{\frac12}}.
\end{align}
%where 
%$$
%u(x_i,y_i,\bar x_{k,i}):= \frac{\a_{i,k}^2}{\bar \nu_{k,i}^{x_i,y_i,\bar x_{k,i}}\sum_{x_k} f(y_i,x_k,\bar x_{k,i}) \mu_k^{y_i,\bar x_{k,i}}(x_k)}
%$$
%where.....
\end{proposition}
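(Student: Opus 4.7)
The plan is to use the logarithmic mean identity $\log a-\log b=(a-b)/\Lambda(a,b)$, together with the joint concavity and homogeneity of $\Lambda$, to decompose $\grad_i\log\mu_k[f](x;y_i)$ as the sum of a $\grad_i\log f$ contribution and a change-of-measure covariance contribution. To lighten notation throughout this sketch I abbreviate $\mu_k^{x_i}=\mu_k^{x_i,\bar x_{k,i}}$, $\mu_k^{y_i}=\mu_k^{y_i,\bar x_{k,i}}$, $\phi=\phi_{i,k}(x_i,y_i,\cdot,\bar x_{k,i})$, $\bar\nu=\bar\nu_{k,i}^{x_i,y_i,\bar x_{k,i}}$, and $\nu=\nu_{k,i}^{x_i,y_i,\bar x_{k,i}}$.

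Starting from $\grad_i\log\mu_k[f](x;y_i)=\log\mu_k[f](y_i,\bar x_{k,i})-\log\mu_k[f](x_i,\bar x_{k,i})$, I would insert the intermediate quantity $\log\mu_k^{x_i}[f(y_i,\cdot,\bar x_{k,i})]$ to split the difference into two pieces. The first piece, which keeps the integrand $f(y_i,\cdot)$ fixed while changing the measure from $\mu_k^{x_i}$ to $\mu_k^{y_i}$, has numerator $-\cov_{\mu_k^{y_i}}(f(y_i,\cdot,\bar x_{k,i}),\phi)$, upon using $d\mu_k^{x_i}/d\mu_k^{y_i}=\phi$ and $\mu_k^{y_i}[\phi]=1$. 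The second piece, which keeps the measure $\mu_k^{x_i}$ fixed while changing the integrand, has numerator $\bar\nu\sum_{x'_k}\grad_i\log f(\bar x_k,x'_k;y_i)\,\nu(x'_k)$, upon applying the pointwise log-mean identity $f(y_i,x'_k,\bar x_{k,i})-f(x_i,x'_k,\bar x_{k,i})=\Lambda(f(y_i,x'_k,\bar x_{k,i}),f(x_i,x'_k,\bar x_{k,i}))\,\grad_i\log f(\bar x_k,x'_k;y_i)$ and recognizing the definitions of $\bar\nu$ and $\nu$ from \eqref{alab3}.

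The heart of the argument is then to lower bound the corresponding $\Lambda$-denominators. Writing both $\mu_k[f](y_i,\bar x_{k,i})=\mu_k^{x_i}[f(y_i,\cdot)/\phi]$ and $\mu_k[f](x_i,\bar x_{k,i})=\mu_k^{x_i}[f(x_i,\cdot)]$ as expectations under the common measure $\mu_k^{x_i}$, and applying the joint concavity of $\Lambda$ together with the homogeneity relation $\Lambda(u/c,v)=\Lambda(u,cv)/c$ and the pointwise lemma $\Lambda(u/\phi,v)\geq\Lambda(u,v)/\alpha_{i,k}$ (valid for $\phi\in[1/\alpha_{i,k},\alpha_{i,k}]$ and verified separately for $\phi\leq 1$ and $\phi\geq 1$ using monotonicity of $\Lambda$), I obtain the key concavity estimate $\Lambda(\mu_k[f](y_i,\bar x_{k,i}),\mu_k[f](x_i,\bar x_{k,i}))\geq\bar\nu/\alpha_{i,k}$. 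This immediately controls the second piece and produces the $\alpha_{i,k}$ prefactor in front of the $\grad_i\log f$ term. For the covariance piece, whose denominator is $\Lambda(\mu_k^{y_i}[f(y_i,\cdot)],\mu_k^{x_i}[f(y_i,\cdot)])$, I would combine the AM-GM estimate $\Lambda(u,v)\geq\sqrt{uv}$ with the elementary change-of-measure bound $\mu_k^{x_i}[f(y_i,\cdot)]\geq\mu_k^{y_i}[f(y_i,\cdot)]/\alpha_{i,k}$ (a consequence of $\phi\geq 1/\alpha_{i,k}$) and with the previous concavity estimate to arrive at a denominator comparable to $(\bar\nu\cdot\mu_k^{y_i}[f(y_i,\cdot)])^{1/2}/\alpha_{i,k}$, which produces the second term of the proposition after division by the covariance numerator.

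The main obstacle I expect will be the precise calibration of these two lower bounds on $\Lambda$ (concavity plus AM-GM) needed to produce exactly the square-root denominator $(\bar\nu\cdot\mu_k^{y_i}[f(y_i,\cdot)])^{1/2}$ rather than a weaker estimate; the fact that $\Lambda$ always lies between the geometric and arithmetic means, combined with the uniform two-sided bound $\phi\in[1/\alpha_{i,k},\alpha_{i,k}]$, is what ultimately makes the interpolation between the two inequalities work.
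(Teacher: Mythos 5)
Your decomposition via the intermediate quantity $\log\mu_k^{x_i}[f(y_i,\cdot)]$, your identification of the two numerators, and your concavity estimate $\L\big(\mu_k[f](y_i,\cdot),\mu_k[f](x_i,\cdot)\big)\geq\bar\nu/\a_{i,k}$ all match the paper, and your ``pointwise lemma'' on $\L$ is a correct repackaging of the paper's monotonicity-plus-homogeneity inequality \eqref{eq_monL}. The gap is in the final step. You are trying to produce the square-root denominator $(\bar\nu\cdot\mu_k^{y_i}[f(y_i,\cdot)])^{1/2}/\a_{i,k}$ by a \emph{single} pointwise lower bound on the log-mean $\L(u,v)$ with $u=\mu_k^{y_i}[f(y_i,\cdot)]$, $v=\mu_k^{x_i}[f(y_i,\cdot)]$. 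Such a bound is false: take $\O_k=\{0,1\}$, $\mu_k^{x_i}=\mu_k^{y_i}$ uniform (so $\phi\equiv1$, $\a_{i,k}=1$), and $f(y_i,\cdot)\equiv\e$, $f(x_i,\cdot)\equiv M$ with $\e\ll M$. Then $u=v=\e$, so $\L(u,v)=\e$, while $\bar\nu=\L(\e,M)\geq\sqrt{\e M}\gg\e$, hence $(\bar\nu u)^{1/2}=\sqrt{\e\L(\e,M)}\gg\e=\L(u,v)$. Your AM--GM step would require $v\geq\bar\nu/\a_{i,k}^2$, which this example violates.

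What the paper actually does is derive \emph{two} separate bounds and then combine them. The decomposition with the intermediate quantity yields the covariance term with denominator $\mu_k^{y_i}[f(y_i,\cdot)]$ (this is \eqref{alab1}, and here one only needs $\L(u,v)\geq u/\a_{i,k}$, which \emph{does} hold pointwise via \eqref{eq_monL}). A second, direct application of the log-mean identity to $\grad_i\log\mu_k[f]$ — without inserting the intermediate — gives a common denominator $\L\big(\mu_k[f](y_i,\cdot),\mu_k[f](x_i,\cdot)\big)\geq\bar\nu/\a_{i,k}$ for both pieces, producing the covariance term over $\bar\nu$ (this is \eqref{alab20}). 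Since both bounds hold, one can take the smaller of the two covariance terms and use $\min(1/a,1/b)=1/\max(a,b)\leq1/\sqrt{ab}$ with $a=\mu_k^{y_i}[f(y_i,\cdot)]$, $b=\bar\nu$, together with $\a_{i,k}\geq1$ to upgrade the leading coefficient of the $\grad_i\log f$ term in \eqref{alab1}. That combination, not a single log-mean estimate, is the source of the square-root denominator in \eqref{alab30}.
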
 
\begin{proof}
We start with simple manipulations: for $k\neq i$,
\begin{align*} 
&\grad_i \log \mu_k [f](x;y_i) =  \log \mu_k [f](y_i,\bar x_{k,i}) -  \log \mu_k [f](x_i,\bar x_{k,i})\\
& \quad= \log \sum_{x_k}f(y_i,x_k,\bar x_{k,i}) \mu_k^{y_i,\bar x_{k,i}} (x_k) -  \log \sum_{x_k} f(y_i,x_k,\bar x_{k,i}) \mu_k^{x_i,\bar x_{k,i}} (x_k) \\ &\quad \quad + \log  \sum_{x_k} f(y_i,x_k,\bar x_{k,i}) \mu_k^{x_i,\bar x_{k,i}} (x_k) -  \log  \sum_{x_k} f(x_i,x_k,\bar x_{k,i}) \mu_k^{x_i,\bar x_{k,i}} (x_k).
\end{align*}
%Here and below we often replace sums by integrals to lighten the notation. 
Using the function $\L$ in \eqref{logmean}, we have
\begin{align*} 
&\grad_i \log \mu_k [f](x;y_i) \\ &\quad = 
\frac{ \sum_{x_k}f(y_i,x_k,\bar x_{k,i}) \big(\mu_k^{y_i,\bar x_{k,i}}(x_k)- \mu_k^{x_i,\bar x_{k,i}} (x_k)\big)}{\L\big( \sum_{x_k} f(y_i,x_k,\bar x_{k,i}) \mu_k^{y_i,\bar x_{k,i}}(x_k) , \sum_{x_k} f(y_i,x_k,\bar x_{k,i}) \mu_k^{x_i,\bar x_{k,i}} (x_k)\big)} 
 \\ &\quad \quad \qquad+ 
 \frac{ \sum_{x_k} \grad_if(x;y_i) \mu_k^{x_i,\bar x_{k,i}}(x_k)} {\L\big( \sum_{x_k}f(y_i,x_k,\bar x_{k,i}) \mu_k^{x_i,\bar x_{k,i}}(x_k) , \sum_{x_k} f(x_i,x_k,\bar x_{k,i}) \mu_k^{x_i,\bar x_{k,i}} (x_k)\big)} 
 \end{align*}
 Note that 
 \begin{equation}\label{eq_monL}
 \L(a,b)\geq q\L(a',b')\,, \qquad \text{if}\;\;a\geq qa' \;\text{and}\; b\geq qb' \,,
 \end{equation}
 for $a',b',q\geq 0$.
This follows e.g.\ from the representation $\L(a,b)=\int_0^1 a^{1-t}b^t dt$ of the logarithmic mean \eqref{logmean}. Since $\L(a,a)=a$, one has
\begin{align}\label{1a_tensor}
 &\L\big(\textstyle\sum_{x_k}  f(y_i,x_k,\bar x_{k,i}) \mu_k^{y_i,\bar x_{k,i}}(x_k) ,\sum_{x_k} f(y_i,x_k,\bar x_{k,i}) \mu_k^{x_i,\bar x_{k,i}} (x_k)\big)\\&\qquad
\quad \geq\big[\sup_{x'_k}\phi_{i,k}(y_i,x_i,x'_k,\bar x_{k,i}) \big]^{-1}\textstyle\sum_{x_k} f(y_i,x_k,\bar x_{k,i}) \mu_k^{y_i,\bar x_{k,i}}(x_k)\\&\qquad\quad
\geq
(\a_{i,k})^{-1} \textstyle\sum_{x_k} f(y_i,x_k,\bar x_{k,i}) \mu_k^{y_i,\bar x_{k,i}}(x_k).
\end{align}  
%where we define
%\begin{align}\label{2a_tensor}
%\a(y_i,x_i,\bar x_{k,i}) &:= %\max\Big\{
%\sup_{x_k}\,\frac{ \mu_k^{y_i,\bar x_{k,i}}(x_k)}{ \mu_k^{x_i,\bar x_{k,i}}(x_k)}
%%\,,\,1\Big\}
%\end{align}
%and we use the fact that $\a(y_i,x_i,\bar x_{k,i})\geq 1$ (because $\mu_k^{x_i,\bar x_{k,i}}$ and $\mu_k^{y_i,\bar x_{k,i}}$ are both probabilities).
Note
%define
%\begin{align}\label{hdef}h(x_i,y_i,x_k,\bar x_{k,i}):=\frac{ \mu_k^{x_i,\bar x_{k,i}}(x_k)}{ \mu_k^{y_i,\bar x_{k,i}}(x_k)}.
%\end{align}  
%Note
 that 
\begin{align*} 
&\sum_{x_k} f(y_i,x_k,\bar x_{k,i})\big(\mu_k^{y_i,\bar x_{k,i}}(x_k)- \mu_k^{x_i,\bar x_{k,i}} (x_k)\big) \\&
\qquad\qquad  = -\cov_{\mu_k^{y_i,\bar x_{k,i}}}\big(f(y_i,\cdot,\bar x_{k,i}), \phi_{i,k}(x_i,y_i,\cdot,\bar x_{k,i})\big).
\end{align*}
Moreover, the concavity of $(a,b)\mapsto \L(a,b)$ implies that
\begin{align}
&\L\big(\textstyle \sum_{x_k}f(y_i,x_k,\bar x_{k,i}) \mu_k^{x_i,\bar x_{k,i}}(x_k) , \sum_{x_k} f(x_i,x_k,\bar x_{k,i}) \mu_k^{x_i,\bar x_{k,i}} (x_k)\big)\nonumber \\ & 
\geq
\textstyle \sum_{x_k} \L\big(f(y_i,x_k,\bar x_{k,i}),f(x_i,x_k,\bar x_{k,i})\big) \mu_k^{x_i,\bar x_{k,i}} (x_k)=\bar \nu_{k,i}^{x_i,y_i,\bar x_{k,i}}.\label{3a_tensor}
\end{align}  
%Introduce the probability measure $\nu_{k,i}^{x_i,y_i,\bar x_{k,i}}(x_k) $ on $\O_k$ defined by
%\begin{align}\label{alab3} 
%\nu_{k,i}^{x_i,y_i,\bar x_{k,i}} (x_k) =\frac{ \L\big(f(y_i,x_k,\bar x_{k,i}),f(x_i,x_k,\bar x_{k,i})\big) \mu_k^{x_i,\bar x_{k,i}}(x_k)}{\bar \nu_{k,i}^{x_i,y_i,\bar x_{k,i}}}.
% \end{align}
By definition \eqref{alab3} we can write:
\begin{align}\label{alab2} 
&\frac{ \sum_{x_k} |\grad_if(x;y_i) |\mu_k^{x_i,\bar x_{k,i}}(x_k)} {\bar \nu_{k,i}^{x_i,y_i,\bar x_{k,i}}
 }  = %\textstyle 
 \sum_{x_k}|\grad_i \log f(x;y_i) | \nu_{k,i}^{x_i,y_i,\bar x_{k,i}}  (x_k).
 \end{align}
Combining the above bounds we have obtained
%\begin{lemma}\label{lemma1}
\begin{align}\label{alab1} 
&|\grad_i \log \mu_k [f](x;y_i)| \leq \sum_{x_k}|\grad_i \log f(x;y_i) | \nu_{k,i}^{x_i,y_i,\bar x_{k,i}}  (x_k)
\nonumber \\
& \qquad +\a_{i,k}\,
\frac{\big|\cov_{\mu_k^{y_i,\bar x_{k,i}}}\big(f(y_i,\cdot,\bar x_{k,i}), \phi_{i,k}(x_i,y_i,\cdot,\bar x_{k,i})\big)\big|
}{\sum_{x_k} f(y_i,x_k,\bar x_{k,i}) \mu_k^{y_i,\bar x_{k,i}}(x_k)}. \end{align}
%\end{lemma} 
We now derive a slightly different bound on $\grad_i \log \mu_k [f](x;y_i)$. 
%that is alternative to \eqref{alab1}.
%Using the logarithmic mean~$\Lambda$ we get
Namely,  \begin{align*}
      & \grad_i \log \mu_k [f](x;y_i) \\ & \quad = \frac{\sum_{x_k}f(y_i,x_k,\bar x_{k,i}) \mu_k^{y_i,\bar x_{k,i}}(x_k)-\sum_{x_k} f(x_i,x_k,\bar x_{k,i}) \mu_k^{x_i,\bar x_{k,i}} (x_k)}{\L\big(\sum_{x_k} f(y_i,x_k,\bar x_{k,i}) \mu_k^{y_i,\bar x_{k,i}}(x_k) ,\sum_{x_k} f(x_i,x_k,\bar x_{k,i}) \mu_k^{x_i,\bar x_{k,i}} (x_k)\big)} 
\\&
   \quad   = 
    \frac{-\cov_{\mu_k^{y_i,\bar x_{k,i}}}\big(f(y_i,\cdot,\bar x_{k,i}), \phi_{i,k}(x_i,y_i,\cdot,\bar x_{k,i})\big)}{\L\big(\sum_{x_k}  f(y_i,x_k,\bar x_{k,i}) \mu_k^{y_i,\bar x_{k,i}}(x_k) ,\sum_{x_k} f(x_i,x_k,\bar x_{k,i}) \mu_k^{x_i,\bar x_{k,i}} (x_k)\big)} \;
    \\
    & \qquad +\frac{\sum_{x_k}  \grad_if(x;y_i) \mu_k^{x_i,\bar x_{k,i}}(x_k)}{\L\big(\sum_{x_k}  f(y_i,x_k,\bar x_{k,i}) \mu_k^{y_i,\bar x_{k,i}}(x_k) ,\sum_{x_k} f(x_i,x_k,\bar x_{k,i}) \mu_k^{x_i,\bar x_{k,i}} (x_k)\big)}
    \end{align*}
    Using \eqref{eq_monL} and \eqref{3a_tensor}, we have
\begin{align*}
%&\L\big(\int f(y_i,x_k,\bar x_{k,i}) \mu_k^{y_i,\bar x_{k,i}}(dx_k) ,\int f(x_i,x_k,\bar x_{k,i}) \mu_k^{x_i,\bar x_{k,i}} (dx_k)\big)
%\\&\;
%\geq \a(x_i,y_i,\bar x_{k,i})^{-1} \L\big(\int f(y_i,x_k,\bar x_{k,i}) \mu_k^{x_i,\bar x_{k,i}}(dx_k) ,\int f(x_i,x_k,\bar x_{k,i}) \mu_k^{x_i,\bar x_{k,i}} (dx_k)\big)
%\\&\;
%\quad 
\L\Big(\sum_{x_k}  f(y_i,x_k,\bar x_{k,i}) \mu_k^{y_i,\bar x_{k,i}}(x_k) ,%\textstyle
 \sum_{x_k} f(x_i,x_k,\bar x_{k,i}) \mu_k^{x_i,\bar x_{k,i}} (x_k)\Big)\geq \frac{\bar \nu_{k,i}^{x_i,y_i,\bar x_{k,i}}}{\a_{i,k}}.
\end{align*}  
Therefore the first term in the expression of $ \grad_i \log \mu_k [f](x;y_i)$ above is bounded in absolute value by
   $$\frac{\a_{i,k}}{\bar \nu_{k,i}^{x_i,y_i,\bar x_{k,i}}}\,
   \big|\cov_{\mu_k^{y_i,\bar x_{k,i}}}\big(f(y_i,\cdot,\bar x_{k,i}), \phi(x_i,y_i,\cdot,\bar x_{k,i})\big)\big|
   .
$$
    Similarly, the second term is bounded by
    \begin{align*}
&  \frac{\a_{i,k}}{\bar \nu_{k,i}^{x_i,y_i,\bar x_{k,i}}}\,
\sum_{x_k} |\grad_if(x;y_i)| \mu_k^{x_i,\bar x_{k,i}}(x_k)\\
&\qquad = \a_{i,k}\sum_{x_k}|\grad_i \log f(x;y_i) | \nu_{k,i}^{x_i,y_i,\bar x_{k,i}}  (x_k).
    \end{align*}
    Thus, we have obtained the following estimate: 
\begin{align}\label{alab20} 
&|\grad_i \log \mu_k [f](x;y_i)| \leq \frac{\a_{i,k}}{\bar \nu_{k,i}^{x_i,y_i,\bar x_{k,i}}}\,
   \big|\cov_{\mu_k^{y_i,\bar x_{k,i}}}\big(f(y_i,\cdot,\bar x_{k,i}), \phi(x_i,y_i,\cdot,\bar x_{k,i})\big)\big|\nonumber\\
&\quad \qquad +\a_{i,k}\sum_{x_k}|\grad_i \log f(x;y_i) | \nu_{k,i}^{x_i,y_i,\bar x_{k,i}}  (x_k).
 \end{align}
Finally, using $\a_{i,k}\geq 1$ and putting together \eqref{alab1} and \eqref{alab20} 
it is immediate to obtain the desired bound \eqref{alab30}.
%From Lemma  \ref{lemma1} and Lemma \ref{lemma2} it follows that \eqref{alab30} holds with $\sqrt{u(x_i,y_i,\bar x_{k,i})}$ replaced by $\min\{c_1,c_2\}$, where 
%\begin{gather*}
%c_1 = \frac{\a(y_i,x_i,\bar x_{k,i})}{\int f(y_i,x_k,\bar x_{k,i}) \mu_k^{y_i,\bar x_{k,i}}(dx_k)}, \quad 
%c_2= \frac{\a(x_i,y_i,\bar x_{k,i})}{\bar \nu_{k,i}^{x_i,y_i,\bar x_{k,i}}}
%\end{gather*}
%Therefore, it must hold with $\sqrt{u(x_i,y_i,\bar x_{k,i})}=\sqrt{c_1c_2}$.
\end{proof}
The next task is to plug the bound of Proposition \ref{coro3} into the last term of \eqref{e_decomp_Gibbs_beginning}. This produces the two terms
\begin{align} 
&A:=
\sum_{k,i:\;k\neq i}\a_{i,k}
\sum_x\sum_{y_i}\mu(x)\mu^{\bar x_i}_i(y_i)|\grad_i f(x;y_i)| \times\nonumber \\
& \quad \qquad\times\sum_{x'_k} |\grad_i \log f(\bar x_k,x'_k;y_i) | \nu_{k,i}^{x_i,y_i,\bar x_{k,i}}  (x'_k),
\label{aterm}\\
&
B:=
\sum_{k,i:\;k\neq i}\a_{i,k}
\sum_x\sum_{y_i}\mu(x)\mu^{\bar x_i}_i(y_i)|\grad_i f(x;y_i)| \times\nonumber \\
& \quad \qquad\times\frac{\big|\cov_{\mu_k^{y_i,\bar x_{k,i}}}\big(f(y_i,\cdot,\bar x_{k,i}), \phi_{i,k}(x_i,y_i,\cdot,\bar x_{k,i})\big)\big|}{\big(\bar \nu_{k,i}^{x_i,y_i,\bar x_{k,i}}\sum_{x'_k} f(y_i,x'_k,\bar x_{k,i}) \mu_k^{y_i,\bar x_{k,i}}(x'_k)
\big)^{\frac12}}.\label{bterm}
\end{align}
%Note that so far we have not used any special form of the rates $c_i(y_i|x)$. 
%\begin{remark}\label{rem21}
%We remark that in the case of a product measure one has that $\a(\cdot)\equiv1$ and that $h$ is a constant so that the covariance term is zero. Therefore  
%Corollary \ref{coro3} yields 
%\begin{align}\label{alab31} 
%&|\grad_i \log \mu_k [f](x;y_i)| \leq 
%\int |\grad_i \log f(x;y_i) | \nu_{k,i}^{x_i,y_i,\bar x_{k,i}}  (dx_k),
% \end{align}
%and the term $A$ in \eqref{aterm} vanishes in the product case. In the general case, $B$ will allow the main cancellation, while the covariance term $A$ will be a small correction.
% \end{remark}
Below, we analyze the two terms separately. We start with term $A$ which allows for the main cancellation in \eqref{e_decomp_Gibbs_beginning}. 
\subsection{The main cancellation}\label{main_canc}
%Here we analyze the contribution to \eqref{e_decomp_Gibbs_beginning} 
%of the term $B$ in \eqref{bterm}. 
Let us rewrite $A=\sum_{k,i:\;k\neq i} A_{k,i}$, with
\begin{align*} 
A_{k,i}&=\a_{i,k}
\sum_{\bar x_{k,i}}\sum_{x_i}\sum_{y_i}\mu(\bar x_{k,i},x_i)
\Big(\sum_{x_k}\mu_k^{x_i,\bar x_{k,i}}(x_k)\mu^{x_k,\bar x_{k,i}}_i(y_i)|\grad_i f(x;y_i)| \Big) \times \\ & 
\qquad\quad  \quad\times\Big(\sum_{x'_k} |\grad_i \log f(\bar x_k,x'_k;y_i) | \nu_{k,i}^{x_i,y_i,\bar x_{k,i}}  (x'_k)\Big),
 \end{align*}
where we use \begin{equation}\label{basic}
\mu(x)=\mu(\bar x_{k,i},x_i)\mu_k^{x_i,\bar x_{k,i}}(x_k)\,, \quad \mu(\bar x_{k,i},x_i) = \mu(\bar x_k) = \sum_{x_k}\mu(x).
\end{equation} %Suppose we are working with the heat bath choice \eqref{rateshb}. Then, using
Since
\begin{equation}\label{logmeanu}
\mu_k^{x_i,\bar x_{k,i}}(x_k)|\grad_i f(x;y_i)|
= \bar \nu_{k,i}^{x_i,y_i,\bar x_{k,i}}
 |\grad_i \log f(\bar x_k,x_k,y_i) | \nu_{k,i}^{x_i,y_i,\bar x_{k,i}}  (x_k),
\end{equation}
we obtain 
\begin{align*} 
A_{k,i}&\leq\a_{i,k}
%\sum_{\bar x_{k,i}}\sum_{x_i}\mu(\bar x_{k,i},x_i)
%\sum_{x_k}\mu_k^{x_i,\bar x_{k,i}}(x_k)\sum_{y_i}\mu_i^{\bar x_{k,i},x_k}(y_i)|\grad_i f(x;y_i)| \times \\ & 
% \times \int |\grad_i \log f(\bar x_k,x'_k,y_i) | \nu_{k,i}^{x_i,y_i,\bar x_{k,i}}  (dx'_k)\\
  \sum_{\bar x_{k,i}}\sum_{x_i}\mu(\bar x_{k,i},x_i) \sum_{y_i}(\sup_{y_k}\mu_i^{y_k,\bar x_{k,i}}(y_i))\\ & \qquad \times \bar \nu_{k,i}^{x_i,y_i,\bar x_{k,i}}
 \Big(\sum_{x_k} |\grad_i \log f(\bar x_k,x_k,y_i) | \nu_{k,i}^{x_i,y_i,\bar x_{k,i}}  (x_k)\Big)^2
 \end{align*}
Using Schwarz' inequality one has
\begin{align*} 
A_{k,i}&\leq\a_{i,k}
  \sum_{\bar x_{k,i}}\sum_{x_i}\mu(\bar x_{k,i},x_i) \sum_{y_i}(\sup_{y_k}\mu_i^{y_k,\bar x_{k,i}}(y_i))\\ & \qquad \times \bar \nu_{k,i}^{x_i,y_i,\bar x_{k,i}}
 \sum_{x_k} |\grad_i \log f(\bar x_k,x_k,y_i) |^2 \nu_{k,i}^{x_i,y_i,\bar x_{k,i}}  (x_k))\\ & 
 \leq \a_{i,k}
  \sum_{\bar x_{k,i}}\sum_{x_i}\mu(\bar x_{k,i},x_i) \sum_{y_i}(\sup_{y_k,y'_k}\phi_{k,i}(y_k,y'_k,y_i,\bar x_{k,i}))\\ & \qquad \times \bar \nu_{k,i}^{x_i,y_i,\bar x_{k,i}}
 \sum_{x_k} |\grad_i \log f(\bar x_k,x_k,y_i) |^2 \mu_i^{x_k,\bar x_{k,i}}(y_i)\nu_{k,i}^{x_i,y_i,\bar x_{k,i}}  (x_k),
% \\ & 
% \leq \a_{i,k}^2
%  \sum_{\bar x_{k,i}}\sum_{x_i}\mu(\bar x_{k,i},x_i) \sum_{y_i}
%  %(\sup_{y_k,y'_k}\phi_{k,i}(y_k,y'_k,y_i\bar x_{k,i})
%  \\ & \qquad \times \bar \nu_{k,i}^{x_i,y_i,\bar x_{k,i}}
% \sum_{x_k} |\grad_i \log f(\bar x_k,x_k,y_i) |^2 \mu_i^{\bar x_{k,i},x_k}(y_i)\nu_{k,i}^{x_i,y_i,\bar x_{k,i}}  (x_k)),
 \end{align*}
where we use 
$$
\frac{\sup_{y_k}\mu_i^{y_k,\bar x_{k,i}}(y_i)}{\inf_{y'_k}\mu_i^{y'_k,\bar x_{k,i}}(y_i)} = \sup_{y_k,y'_k}\phi_{k,i}(y_k,y'_k,y_i,\bar x_{k,i}).
$$
%$\phi_{k,i}$ is defined in \eqref{couphi}.
Therefore, using \eqref{logmeanu} and rearranging summations one arrives at 
\begin{align} \label{alab5} 
A_{k,i}&
 \leq \a_{i,k}\a_{k,i}\sum_x\sum_{y_i}\mu(x)\mu_i^{\bar x_{i}}(y_i)\grad_i f(x;y_i) \grad_i \log f(x;y_i).
 \end{align}
From \eqref{alab5} it follows that  \eqref{e_decomp_Gibbs_beginning}
can be bounded from below as follows:
\begin{align} 
\sum_{k,i}
&\sum_x\sum_{y_i}\mu(x)\mu^{\bar x_i}_i(y_i)\grad_i f(x;y_i) \grad_i \big(\log f - \log \mu_k [f]\big)(x;y_i) \\
& \geq -B + (1-\g)\sum_{i}\sum_x\sum_{y_i}\mu(x)\mu_i^{\bar x_i}(y_i)\grad_i f(x;y_i) \grad_i \log f (x;y_i) ,
%\nonumber \\ & 
% = -A + 2(1-\g)\cE(f,\log f),
\label{firststep}
\end{align}
where $B$ is given in \eqref{bterm} and 
\begin{align} \label{gammaco}
\g=\max_{i}\sum_{k:\,k\neq i}(\a_{i,k}\a_{k,i}-1).
\end{align}

%The next step is then an estimate of the form
%\begin{align} 
%&A\leq \k\sum_{i}\sum_x\sum_{y_i}\mu(x)c_i(y_i|x)\grad_i f(x;y_i) \grad_i \log f (x;y_i),
%\label{secstep}
%\end{align}
%for some constant $\k>0$.
%Once this is established we have obtained that \eqref{q_tensor} holds with $$\r = 1-\g-\k.$$ 
%\begin{remark}\label{rem3}
%We have derived \eqref{alab5} for the case of heat bath rates \eqref{rateshb}, but the same reasoning can be applied in the case of the rates \eqref{ratesch} or \eqref{ratesch2}, with the only difference that the coefficient $\b_{k,i}$ will take a slightly different form. These coefficients will again have the property that, at ``high temperature", $\b_{k,i}$ is close to $1$ in such a way that $\g< 1$. 
%We remark that in the case of a product measure one has $\b_{k,i}=\a_{k,i}=1$ so that $\g=0$ and $\k=0$ in this case, in agreement with the result of Lemma \ref{tensor_prod}.
% \end{remark}
\subsection{Covariance estimate}\label{cov_est}
The next step is an estimate of the form
\begin{align} 
&B\leq \k\sum_{i}\sum_x\sum_{y_i}\mu(x)\mu_i^{\bar x_{i}}(y_i)\grad_i f(x;y_i) \grad_i \log f (x;y_i),
\label{secstep}
\end{align}
for a suitable constant $\k>0$.
We start with the following statement. 
\begin{lemma}\label{covlemma}
For any fixed $k$ and for all configurations $\bar x_k\in\bar\O_k$, for any pair of functions $g,\psi:\O_k\mapsto\bbR$, with $g\geq 0$: 
\begin{align}\label{entrokk1} 
&
\big|\cov_{\mu_k^{\bar x_{k}}}\big(g, \psi\big)\big|\leq 
\frac 1{2} \,\big(\sup_{z_k,z'_k}|\psi(z_k)-\psi(z'_k)|\,\big)\Big(\sum_{x_k} g(x_k) \mu_k^{\bar x_k}(x_k)
\Big)^{\frac12}\times
\nonumber \\
& \qquad \times \Big(\sum_{x_k}\sum_{y_k}\mu_k^{\bar x_{k}}(x_k)\mu_k^{\bar x_{k}}(y_k)[g(y_k)-g(x_k)][\log g(y_k)-\log g(x_k)]
\Big)^{\frac12}.\end{align}
\end{lemma}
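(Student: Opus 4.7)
\textbf{Proof plan for Lemma~\ref{covlemma}.} My plan is to start from the symmetric representation of the covariance and to then bound separately the contribution from $\psi$ (using its oscillation) and from $g$ (using the logarithmic mean together with an arithmetic--logarithmic mean comparison). Throughout, write $\nu = \mu_k^{\bar x_k}$ for brevity, and assume first that $g(x_k)>0$ for all $x_k$; the case of vanishing values follows by approximating $g$ with $g+\varepsilon$ and letting $\varepsilon\to 0$.

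The first step is to use the identity
\begin{equation*}
\cov_\nu(g,\psi) \;=\; \tfrac12 \sum_{x_k,y_k} \nu(x_k)\nu(y_k)\,[g(y_k)-g(x_k)]\,[\psi(y_k)-\psi(x_k)],
\end{equation*}
so that
\begin{equation*}
\bigl|\cov_\nu(g,\psi)\bigr| \;\le\; \tfrac12\, \Bigl(\sup_{z_k,z_k'}|\psi(z_k)-\psi(z_k')|\Bigr)\, \sum_{x_k,y_k}\nu(x_k)\nu(y_k)\,|g(y_k)-g(x_k)|.
\end{equation*}
This already extracts the oscillation factor. It remains to estimate the double sum of $|g(y_k)-g(x_k)|$ in terms of the mean of $g$ and the $g\log g$-type Dirichlet expression on the right-hand side of \eqref{entrokk1}.

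For this, the key step is to use the logarithmic mean $\Lambda$ defined in \eqref{logmean}, which satisfies the basic identity $a-b=\Lambda(a,b)\,[\log a-\log b]$ and the comparison $\Lambda(a,b)\le (a+b)/2$ between the logarithmic and the arithmetic mean (both readily obtained from $\Lambda(a,b)=\int_0^1 a^{1-t}b^t\,dt$). Writing
\begin{equation*}
|g(y_k)-g(x_k)| \;=\; \sqrt{\Lambda(g(y_k),g(x_k))}\;\sqrt{[g(y_k)-g(x_k)][\log g(y_k)-\log g(x_k)]},
\end{equation*}
and applying the Cauchy--Schwarz inequality with respect to the product measure $\nu\otimes\nu$, I obtain
\begin{align*}
\sum_{x_k,y_k}\nu(x_k)\nu(y_k)\,|g(y_k)-g(x_k)|
\;\le\;&\Bigl(\sum_{x_k,y_k}\nu(x_k)\nu(y_k)\,\Lambda(g(y_k),g(x_k))\Bigr)^{1/2} \\
&\times \Bigl(\sum_{x_k,y_k}\nu(x_k)\nu(y_k)\,[g(y_k)-g(x_k)][\log g(y_k)-\log g(x_k)]\Bigr)^{1/2}.
\end{align*}

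The second factor is already exactly the square-root term appearing in \eqref{entrokk1}. For the first factor, the AM--LogMean inequality $\Lambda(a,b)\le (a+b)/2$ together with the symmetry of the sum gives
\begin{equation*}
\sum_{x_k,y_k}\nu(x_k)\nu(y_k)\,\Lambda(g(y_k),g(x_k)) \;\le\; \sum_{x_k,y_k}\nu(x_k)\nu(y_k)\,\tfrac{g(x_k)+g(y_k)}{2} \;=\; \sum_{x_k}\nu(x_k)\,g(x_k),
\end{equation*}
which matches the remaining factor in \eqref{entrokk1}. Combining the two displays proves the lemma. I do not see any real obstacle here beyond ensuring that $g>0$ so that all logarithms are defined; that is handled by the approximation argument indicated at the start.
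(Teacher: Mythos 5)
Your proposal is correct and follows essentially the same approach as the paper: same symmetric covariance representation, same extraction of the oscillation of $\psi$, same Cauchy--Schwarz step, and the same use of the arithmetic--logarithmic mean inequality $\Lambda(a,b)\le (a+b)/2$. The only cosmetic difference is where the logarithmic mean enters: you split $|g(y)-g(x)|=\sqrt{\Lambda(g(y),g(x))}\,\sqrt{(g(y)-g(x))(\log g(y)-\log g(x))}$ and apply the AM--LogMean bound to the first Cauchy--Schwarz factor, whereas the paper splits with $\sqrt{g(x)+g(y)}$ and applies the same bound to the second factor; both give the identical constant.
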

\noindent
%{\bf Note}: \eqref{entrok1} follows with $c=1/ 2$ from the above lemma by taking $g(x_k) = f(y_i,x_k,\bar x_{k,i})$ and $\psi(x_k)=h(x_i,y_i,x_k,\bar x_{k,i})$ for fixed $x_i,y_i,\bar x_{k,i}$. 
\begin{proof}
Set 
$\g(x_k,y_k):= \mu_k^{\bar x_{k}}(x_k)\mu_k^{\bar x_{k}}(y_k)$, and 
write
$$
\cov_{\mu_k^{\bar x_{k}}}\big(g, \psi\big)=\frac12\sum_{x_k,y_k}\g(x_k,y_k)(g(y_k)-g(x_k))(\psi(y_k)-\psi(x_k)).
$$
Therefore, 
$$
\big|\cov_{\mu_k^{\bar x_{k}}}\big(g, \psi\big)\big|\leq\frac12\big(\sup_{z_k,z'_k}|\psi(z_k)-\psi(z'_k)|\,\big)
\sum_{x_k,y_k}\g(x_k,y_k)|g(y_k)-g(x_k)|.
$$
Schwarz' inequality yields
\begin{align*}
&\sum_{x_k,y_k}\g(x_k,y_k)|g(y_k)-g(x_k)|
\leq \Big(\sum_{x_k,y_k}\g(x_k,y_k)(g(x_k)+g(y_k))\Big)^{\frac12}\times\\ &
\qquad \times\Big(\sum_{x_k,y_k}\g(x_k,y_k)\frac{(g(x_k)-g(y_k))^2}{g(x_k)+g(y_k)}\Big)^{\frac12}\\ & \quad =\Big(2\sum_{x_k}g(x_k) \mu_k^{\bar x_k}(x_k)
\Big)^{\frac12}\Big(\sum_{x_k,y_k}\g(x_k,y_k)\frac{(g(x_k)-g(y_k))^2}{g(x_k)+g(y_k)}\Big)^{\frac12}
.\end{align*}
Since $\L(a,b)\leq (a+b)/2$ one has $$(a-b)^2/(a+ b)\leq \frac12(a-b)^2/\L(a,b)=\frac12 (a-b)(\log a - \log b).$$
This shows that
\begin{align*}
&
\sum_{x_k,y_k}\g(x_k,y_k)|g(y_k)-g(x_k)|
\leq\Big(\sum_{x_k} g(x_k) \mu_k^{\bar x_k}(x_k)
\Big)^{\frac12}\times \\ & \qquad \qquad \times \Big(\sum_{x_k,y_k}\g(x_k,y_k)[g(y_k)-g(x_k)][\log g(y_k)-\log g(x_k)]
\Big)^{\frac12}.
\end{align*}
\end{proof}
For all fixed $x_i,y_i,\bar x_{k,i}$, we apply Lemma \ref{covlemma} with $g(x_k) = f(y_i,x_k,\bar x_{k,i})$ and $\psi(x_k)=\phi_{i,k}(x_i,y_i,x_k,\bar x_{k,i})$. One finds 
\begin{align}\label{entrok1} 
&\Big|\cov_{\mu_k^{y_i,\bar x_{k,i}}}\Big(f(y_i,\cdot,\bar x_{k,i}), \phi_{i,k}(x_i,y_i,\cdot,\bar x_{k,i})\Big)\Big| 
\\ &\qquad \leq \frac12 \,\d_{i,k}\Big(\sum_{x_k} f(y_i,x_k,\bar x_{k,i}) \mu_k^{y_i,\bar x_{k,i}}(x_k)
\Big)^{\frac12}\big( \cB^{y_i,\bar x_{k,i}}_k(f,\log f)\big)^{\frac12},
\end{align}
where  $\d_{i,k}$ is defined in \eqref{coupal} and we use the notation 
\begin{align}\label{entrok101} 
\cB^{y_i,\bar x_{k,i}}_k(f,g)=
\sum_{x_k}\sum_{y_k}\mu_k^{y_i,\bar x_{k,i}}(x_k)\mu_k^{y_i,\bar x_{k,i}}(y_k)\grad_k f(\bar x_{i},y_i; y_k) \grad_k g(\bar x_{i},y_i; y_k).
\end{align}
Plugging this into \eqref{bterm} one has %$A= \sum_{k,i:\,k\neq i} A_{k,i}$, with 
\begin{align*} 
&B\leq \frac12
\sum_{k,i:\;k\neq i}\a_{i,k}\d_{i,k}
\sum_x\sum_{y_i}\mu(x)\mu^{\bar x_i}_i(y_i)|\grad_i f(x;y_i)|\big(\bar \nu_{k,i}^{x_i,y_i,\bar x_{k,i}}\big)^{-1/2}\,\big( \cB^{y_i,\bar x_{k,i}}_k(f,\log f)\big)^{\frac12}.
%\times\nonumber \\
%\;\;& \; \times \Big(\sum_{x'_k}\sum_{y_k}\mu_k^{y_i,\bar x_{k,i}}(x'_k) \mu_k^{y_i,\bar x_{k,i}}(y_k) \grad_k f(\bar x_{k,i},x'_k,y_i; y_k) \grad_k \log f (\bar x_{k,i},x'_k,y_i; y_k)
%\Big)^{\frac12}.
\end{align*}
Using \eqref{logmeanu} and reasoning as in Section \ref{main_canc} one has
\begin{align*} 
B&\leq \frac12
\sum_{k,i:\;k\neq i}\a_{i,k}\d_{i,k}
\sum_{\bar x_{k,i}}\sum_{x_i}\mu(\bar x_{k,i},x_i)
\sum_{y_i}(\sup_{z_k}\mu_i^{z_k,\bar x_{k,i}}(y_i))\times \nonumber \\& 
\;\;\times \sum_{x_k}\nu_{k,i}^{x_i,y_i,\bar x_{k,i}}(x_k) |\grad_i \log f(x;y_i)| \big(\bar \nu_{k,i}^{x_i,y_i,\bar x_{k,i}}\big)^{1/2}
\big( \cB^{y_i,\bar x_{k,i}}_k(f,\log f)\big)^{\frac12}
%\times\nonumber \\
%& \;\; \times \Big(\sum_{x'_k}\sum_{y_k}\mu_k^{y_i,\bar x_{k,i}}(x'_k) \mu_k^{y_i,\bar x_{k,i}}(y_k)\grad_k f(\bar x_{k,i},x'_k,y_i; y_k) \grad_k \log f (\bar x_{k,i},x'_k,y_i; y_k)
%\Big)^{\frac12}\nonumber 
\\ & \leq 
\frac12
\sum_{k,i:\;k\neq i}\a_{i,k}\d_{i,k}\sum_{\bar x_{k,i}}\sum_{x_i}\mu(\bar x_{k,i},x_i)
\sum_{y_i}(\sup_{z_k}\mu_i^{z_k,\bar x_{k,i}}(y_i))\times \nonumber \\& \;\;\times
\Big(\sum_{x_k}\nu_{k,i}^{x_i,y_i,\bar x_{k,i}}(x_k) |\grad_i \log f(x;y_i)|^2 \bar \nu_{k,i}^{x_i,y_i,\bar x_{k,i}}\Big)^{1/2}
\big( \cB^{y_i,\bar x_{k,i}}_k(f,\log f)\big)^{\frac12}.
%\times \nonumber\\&
%\;\;\times \Big(\sum_{x'_k}\sum_{y_k}\mu_k^{y_i,\bar x_{k,i}}(x'_k) \mu_k^{y_i,\bar x_{k,i}}(y_k)\grad_k f(\bar x_{k,i},x'_k,y_i; y_k) \grad_k \log f (\bar x_{k,i},x'_k,y_i; y_k).
%\Big)^{\frac12}\
\end{align*}
Using $ab\leq \frac12\,a^2 + \frac12\,b^2$ one has
\begin{align}\label{tterms} B &\leq 
\frac14\sum_{k,i:\;k\neq i}\a_{i,k}\d_{i,k}
\sum_{\bar x_{k,i}}\sum_{x_i}\mu(\bar x_{k,i},x_i)
\sum_{y_i}(\sup_{z_k}\mu_i^{z_k,\bar x_{k,i}}(y_i))\times \\& \quad\times
\sum_{x_k}\nu_{k,i}^{x_i,y_i,\bar x_{k,i}}(x_k) |\grad_i \log f(x;y_i)|^2 \bar \nu_{k,i}^{x_i,y_i,\bar x_{k,i}}+ \nonumber\\&
+ \frac14\sum_{k,i:\;k\neq i}\a_{i,k}\d_{i,k}\sum_{\bar x_{k,i}}\sum_{x_i}\mu(\bar x_{k,i},x_i)
\sum_{y_i}(\sup_{z_k}\mu_i^{z_k,\bar x_{k,i}}(y_i))\,
 \cB^{y_i,\bar x_{k,i}}_k(f,\log f).
%\times \\&
%\quad \times \sum_{x'_k}\sum_{y_k}\mu_k^{y_i,\bar x_{k,i}}(x'_k) \mu_k^{y_i,\bar x_{k,i}}(y_k)\grad_k f(\bar x_{k,i},x'_k,y_i; y_k) \grad_k \log f (\bar x_{k,i},x'_k,y_i; y_k)
\nonumber
\end{align}
Using again \eqref{logmeanu} and the coefficients $\a_{k,i}$ as in Section \ref{main_canc}, the first term in \eqref{tterms} is bounded
by 
\begin{align}
\label{fterm} &
\frac14\sum_{k,i:\;k\neq i}\a_{i,k}\a_{k,i}\d_{i,k}
\sum_{x}\mu(x)
\sum_{y_i}\mu_i^{x_k,\bar x_{k,i}}(y_i)
\grad_i f(x;y_i)\grad_i \log f(x;y_i)\end{align}
Next, we estimate the second term in \eqref{tterms}. Notice that $\cB^{y_i,\bar x_{k,i}}_k(f,\log f)$ depends on $y_i$ and not on  $x_i$. We are going to show that 
\begin{align}
\label{faterm} &
\sum_{x_i}\mu(\bar x_{k,i},x_i)(\sup_{z_k}\mu_i^{z_k,\bar x_{k,i}}(y_i))\leq \a_{k,i}\mu(\bar x_{k,i},y_i).
\end{align}
Write $ \mu(\bar x_{k,i})=\sum_{x_i}\mu(\bar x_{k,i},x_i) $ for the marginal on $\bar\O_{k,i}$.
Let $\mu_{i,k}^{\bar x_{k,i}}(y_i,x_k)$ denote the joint law at $(\O_i,\O_k)$ conditioned on $\bar x_{k,i}\in\bar\O_{k,i}$, and observe that 
$$
\sum_{x_k}\mu_{i,k}^{\bar x_{k,i}}(y_i,x_k) = \sum_{x_k}\mu_{i}^{x_k,\bar x_{k,i}}(y_i) \mu(x_k | \bar x_{k,i}) \geq \inf_{z_k\in\O_k}\mu_{i}^{z_k,\bar x_{k,i}}(y_i) .
$$
Therefore,
\begin{align*} 
&\sum_{x_i}\mu(\bar x_{k,i},x_i)
(\sup_{z_k}\mu_i^{z_k,\bar x_{k,i}}(y_i)) = \mu(\bar x_{k,i})\big(\sup_{z_k}\mu_i^{z_k,\bar x_{k,i}}(y_i)\big) 
\\
&\qquad 
%\leq
%\a_{k,i}\mu(\bar x_{k,i})\sum_{x_k}\mu_{i,k}^{\bar x_{k,i}}(y_i,x_k)
%\\ & 
\leq \a_{k,i}\,\mu(\bar x_{k,i})\big(\inf_{z_k}\mu_{i}^{z_k,\bar x_{k,i}}(y_i)\big) 
\leq \a_{k,i}\,\mu(\bar x_{k,i})\sum_{x_k}\mu_{i,k}^{\bar x_{k,i}}(y_i,x_k) \\ & \qquad 
= \a_{k,i}\sum_{x_k}\mu(\bar x_{k,i},y_i,x_k)= \a_{k,i}\mu(\bar x_{k,i},y_i).
\end{align*} 
This proves \eqref{faterm}. Thanks to this estimate, 
the second term in \eqref{tterms} is estimated with 
\begin{align} \label{sterm}&
\frac14\sum_{k,i:\;k\neq i}\a_{i,k}\a_{k,i}\d_{i,k}
\sum_{x}\mu(x)
\sum_{y_k}\mu_k^{x_i,\bar x_{k,i}}(y_k)
\grad_k f(x;y_k)\grad_k \log f(x;y_k).\end{align}
%
%
%\begin{align*} &
%\frac14\sum_{k,i:\;k\neq i}\a_{i,k}\a_{k,i}\d_{i,k}\sum_{\bar x_{k,i}}
%\sum_{x_k}\sum_{y_i}\sum_{x_i}\mu(\bar x_{k,i},x_i)\mu_i^{x_k,\bar x_{k,i}}(y_i)\mu_k^{y_i,\bar x_{k,i}}(x_k)\times\nonumber \\&
%\times \sum_{y_k} \mu_k^{y_i,\bar x_{k,i}}(y_k)\grad_k f(\bar x_{k,i},x_k,y_i; y_k) \grad_k \log f (\bar x_{k,i},x_k,y_i; y_k).
%\end{align*}
%Next, observe that
%\begin{align}\label{normal} &
%\sum_{x_i}\mu(\bar x_{k,i},x_i)\mu_i^{x_k,\bar x_{k,i}}(y_i) \mu_k^{y_i,\bar x_{k,i}}(x_k)=\sum_{x_i}\mu(x)\mu_i^{x_k,\bar x_{k,i}}(y_i) \frac{\mu_k^{y_i,\bar x_{k,i}}(x_k)}{\mu_k^{x_i,\bar x_{k,i}}(x_k)}\nonumber\\
%&  =\mu(\bar x_{k,i},x_k,y_i)\sum_{x_i}\mu_i^{x_k,\bar x_{k,i}}(x_i) \frac{\mu_k^{y_i,\bar x_{k,i}}(x_k)}{\mu_k^{x_i,\bar x_{k,i}}(x_k)}\leq \a_{i,k}
%\,\mu(\bar x_{k,i},x_k,y_i),
%\end{align}
%where we use \eqref{basic}, reversibility and the  definition of $\a_{i,k}$.
%In conclusion, renaming variables $y_i$ by $x_i$ one obtains that the second term in  \eqref{tterms} is bounded by
%\begin{align*} &
%\frac14\sum_{k,i:\;k\neq i}\a_{i,k}^2\a_{k,i}\d_{i,k}
%\sum_{x}\mu(x)
%\sum_{y_k}\mu_k^{x_i,\bar x_{k,i}}(y_k)
%\grad_k f(x;y_k)\grad_k \log f(x;y_k).\end{align*}
Thus, summing \eqref{fterm} and \eqref{sterm}, the final estimate on $B$ is given by \eqref{secstep} with the coefficient $\k$ defined by:
\begin{equation}\label{kappa} 
\k= \frac14\max_{i}\sum_{k:\,k\neq i}  \d_{k,i} \a_{i,k}\a_{k,i}+ \frac14\max_{k}\sum_{i:\,i\neq k} \d_{k,i} \a_{k,i}\a_{i,k} .
\end{equation}
This concludes the proof of Theorem \ref{mainth}.

%\subsection{Conclusion}
%Summarizing, we have shown that, in the case of heat bath rates \eqref{rateshb}, assuming the validity of the covariance bound \eqref{entrok1},
%the desired estimate \eqref{q_tensor} holds with $\r=1-\g-\k$, with $\g$ given by \eqref{gammaco} and $\k$ given by \eqref{kappa}. 
%It is reasonable to obtain nontrivial bounds in this way in the 'high temperature' regime
%of any spin system on a graph (I will make the computations for Ising model on arbitrary graph). 
%It should be possible to make this approach work for the other 
%choices of rates \eqref{ratesch} or \eqref{ratesch2}, or even for Georg's original choice. However, one should be very careful, because this would change the covariance bound  \eqref{entrok1}.
%Indeed, suppose that the rates are multiplied by an overall constant $s>0$. 
%Then, the constant $c$ appearing in \eqref{entrok1} must scale as $c=c(s)= c_0 s^{-1/2}$ for some $c_0>0$. 
%The argument given in \eqref{normal} above must be applied to $c_i(y_i|\bar x_i)/s$ rather than 
%to $c_i(y_i|\bar x_i)$ alone. In conclusion one must have a constant $\k$ as in \eqref{kappa} with $c$ replaced by $c_0$ (this can be achieved by separating terms in \eqref{tterms} in such a way that $c$ in the first term is $c_0$ and $c$ in the second term is $c_0/s$).  It is not clear to me whether one can gain by making a different choice of rates. I would think that unless we really  modify the argument there is no gain. 
%

\section{Proof of corollaries}\label{sec:pfcor}
\begin{proof}[Proof of Corollary \ref{coro_coro}]
From Lemma \ref{simple} it follows that \eqref{ergo} is satisfied. Define the function 
$$
\hat W_k(x)=\sum_{j:\,j\neq k} J_{jk} w_{jk}(x_j,x_k).
$$
Then, the measure in \eqref{intbado} satisfies
\begin{equation}\label{one-site}
\mu^{x_i,\bar x_{k,i}}_k(x_k) =
\frac{\mu_{0,k}(x_k)e^{\b \hat W_k(x)}}{\sum_{x'_k} \mu_{0,k}(x'_k)e^{\b \hat W_k(x'_k,\bar x_{k})}}.
%+ \b  \sum_{j:\,j\neq k}J_{k,j} x_j)}}{e^{x_k(h_k + \b  \sum_{j:\,j\neq k}J_{k,j} x_j)}+e^{-x_k(h_k + \b  \sum_{j:\,j\neq k}J_{k,j} x_j)}}.
\end{equation}
Notice that for $i\neq k$: $$
\hat W_k(\bar x_{k,i},y_i,x_k) = \hat W_k(\bar x_{k,i},x_i,x_k) + J_{k,i}(w_{i,k}(y_i,x_k)-w_{i,k}(x_i,x_k)).
$$
Setting $\e_{i,k} = 4\b |J_{k,i}|\|w_{i,k}\|_\infty$ it follows that
%$$
%\phi_{i,k}(x_i,y_i,x_k,\bar x_{k,i}) = \frac{1+e^{-2x_k(h_k + \b\sum_{j:\,j\neq k}J_{k,j} x_j+ \b J_{k,i}(y_i-x_i))}}{1+e^{-2x_k(h_k + \b\sum_{j:\,j\neq k}J_{k,j} x_j)}}
%$$
%Set $\e_{i,k} = 4\b |J_{k,i}|\geq 0$ so that 
\begin{equation}\label{phik}
 e^{-\e_{i,k}} \leq
 \phi_{i,k}(x_i,y_i,x_k,\bar x_{k,i}) \leq e^{\e_{i,k}}.
\end{equation}
Moreover, from \eqref{phik} one has
$$
|\phi_{i,k}(x_i,y_i,x_k,\bar x_{i,k})-
 \phi_{i,k}(x_i,y_i,y_k,\bar x_{i,k})|\leq e^{\e_{i,k}} - e^{-\e_{i,k}}.
$$
Therefore, %uniformly in the external fields $\{h_j\}$, 
the coefficients $\a_{i,k}$ and $\d_{i,k}$ satisfy
\begin{equation}\label{aik} 
1\leq \a_{i,k} \leq e^{\e_{i,k}}\,, \qquad 0\leq \d_{i,k} \leq e^{\e_{i,k}} - e^{-\e_{i,k}}.
\end{equation}
The numbers $\g,\kappa$ in Theorem \ref{mainth} can then be bounded by 
$$
\g \leq \max_i\sum_{k\neq i}(e^{2\e_{i,k}}-1)\,,\qquad 
\kappa\leq \frac12 \max_i\sum_{k\neq i}e^{\e_{i,k}}(e^{2\e_{i,k}}-1) = \frac12 q.
$$ 
Under the assumptions of Corollary \ref{coro_coro} one has $\kappa < \frac13, \g \leq q < \frac23$, and therefore one may apply Theorem \ref{mainth} to obtain \eqref{app_tensor} with $C=(1-\g-\k)^{-1}\leq (1-\frac32q)^{-1}$.
\end{proof}

\begin{proof}[Proof of Corollary \ref{coro_int}]
From Corollary \ref{coro_coro} we know that \eqref{app_tensor} holds:
$$
\Ent_{\mu} (f) \leq C\sum_k  \mu\left[\Ent_{\mu_k} (f)\right].
$$ 
From \eqref{one-site} we also have: 
\begin{equation}\label{aiko} 
e^{-2\b\|\hat W_k\|_{\infty}}\leq \frac{\mu^{x_i,\bar x_{k,i}}_k(x_k)}{\mu_{0,k}(x_k)}  \leq e^{2\b\|\hat W_k\|_{\infty}}.
\end{equation}
Thus, the perturbation argument from Lemma \ref{simple} shows that 
$$
\Ent_{\mu} (f) \leq C\sum_k e^{2\b\|\hat W_k\|_{\infty}} \mu\left[\Ent_{\mu_{0,k}} (f)\right].
$$
At this point we can apply the bound \eqref{logcon2}. Therefore
$$
\Ent_{\mu} (f) \leq C'\sum_k e^{2\b\|\hat W_k\|_{\infty}} \mu\left[\mu_{0,k}\left[\partial_kf\partial _k\!\!\,\log f\right]\right],
$$
where $C'=CC_0$.
Using again \eqref{aiko}:
$$
\Ent_{\mu} (f) \leq C'\sum_k e^{4\b\|\hat W_k\|_{\infty}} \mu\left[\partial_kf\partial _k\!\!\,\log f\right].
$$
Finally, observe that 
$$
4\b\|\hat W_k\|_{\infty}\leq 4\b\sum_{j:\,j\neq k} |J_{j,k}| \|w_{j,k}\|_{\infty} = \sum_{j:\,j\neq k}\e_{j,k}\leq \frac{q}2,
$$
where we use $\e_{j,k}\leq \frac12(e^{2\e_{j,k}}-1)$ and $q$ is defined in Corollary \ref{coro_coro}. Therefore, $e^{4\b\|\hat W_k\|_{\infty}} \leq e^{q/2}\leq e^{1/3}$ and the conclusion \eqref{cor_int1} follows with $K = C' e^{1/3}$. 
\end{proof}

\section{Proof of Shearer-type estimates}\label{shearer2}
\begin{proof}[Proof of Proposition \ref{gen_shear}]
As usual, we restrict to the discrete setting. Suppose that $\cA$ is a uniform cover of $[N]$, namely a cover with constant degree, i.e.\ $n(\cA):=\deg_k(\cA)$ is independent of $k$. Let us start by showing that a product measure $\mu=\otimes_{i=1}^N\mu_i$ satisfies
 \begin{align}\label{shea2}
  \sum_{A\in\cA} \ent_{\mu}(f_A)\leq n(\cA) \ent_{\mu}(f). 
\end{align}
By homogeneity, we may assume 
$f$ to be a density w.r.t.\ $\mu$, i.e.\ $\mu[f]=1$. Call $X=(X_1,\dots,X_N)$ the random vector with values in the discrete space $\O$ whose probability distribution is $f\mu$. 
Then $f_A\mu_{A}$, where $ \mu_{A}:= \otimes_{i\in A}\mu_{i}$, is the law of the marginal $X_A=(X_i,\;i\in A)$.
The Shannon entropy $H(X_A)$ of $X_A$, for any $A\subset [N]$ satisfies:
\begin{align}\label{shannon}
H(X_A)&=-\sum_{x_A}f_A(x_A)\mu_{A}(x_A) \log(f_A(x_A)\mu_{A}(x_A)) \nonumber \\ & = -\ent_{\mu}(f_A) -
 \sum_{x_A}\sum_{i\in A}  f_A(x_A)\mu_{A}(x_A) \log (\mu_{i}(x_i))\nonumber \\
 & = -\ent_{\mu}(f_A) -
 \sum_{i\in A} \sum_{x_i} f_i(x_i)\mu_{i}(x_i) \log (\mu_{i}(x_i))\nonumber \\
&=  - \ent_{\mu}(f_A) + \sum_{i\in A} H(X_i) + \sum_{i\in A}\mu[f_i\log f_i].
\end{align}
In other words,
\begin{align}\label{shannon1}
\sum_{i\in A} H(X_i) -H(X_A)
= \ent_{\mu}(f_A) - \sum_{i\in A}\ent_{\mu}(f_i).
\end{align}
The standard Shearer estimate for Shannon entropy (see e.g.\ \cite{MadTet}) 
states that
\begin{align}\label{shear1}
n(\cA)\, H(X)\leq \sum_{A\in\cA} H(X_A).
\end{align}
Therefore, summing over $A\in\cA$ in \eqref{shannon1} and using \eqref{shear1}  
\begin{align}\label{shea3}
  &\sum_{A\in\cA} \ent_{\mu}(f_A) - n(\cA)\sum_{i\in[N]}\ent_{\mu}(f_i)
  \leq  n(\cA)\sum_{i\in[N]}H(X_i) - n(\cA) H(X).
%  \nonumber\\
%  & \qquad\qquad \qquad\leq n(\cA)\left(\sum_{i\in [N]} H(X_i) - H(X)\right) \nonumber\\
%  & \qquad\qquad \qquad= n(\cA)\left(\ent_{\mu_{0}}(f) - \sum_{i\in[N]}\ent_{\mu_0}(f_i)\right).
\end{align}
Using \eqref{shannon1} with $A=[N]$ one obtains 
\eqref{shea2}.

Suppose now that $\cA$ is a non-uniform cover, i.e.\ $n_-(\cA)< n_+(\cA)$. 
We can add singleton sets to $\cA$ until we obtain a uniform cover $\cA'$ such that $n_+(\cA)=n(\cA')$.  It follows 
that 
$$
\sum_{A\in\cA} \ent_{\mu}(f_A)\leq \sum_{A\in\cA'} \ent_{\mu}(f_A)\leq 
n(\cA')\, \ent_{\mu}(f) = n_+(\cA) \,\ent_{\mu}(f). 
$$
This proves \eqref{oshea2}, which is equivalent to \eqref{oshea21}.
\end{proof}
\begin{proof}[Proof of Corollary \ref{coro_shear}]
From Theorem \ref{mainth} one has
\begin{align}\label{corea}
\ent_{\mu}(f)&\leq C\sum_k \mu\left[\ent_{\mu_k}(f)\right] \\
&= C\sum_k \sum_{B\in\cB:\; B\ni k}\frac{1}{\deg_k(\cB)}\,\mu\left[\ent_{\mu_k}(f)\right]\\
& \leq \frac{C}{n_-(\cB)} \sum_{B\in\cB}\sum_{k\in B}\mu\left[\ent_{\mu_k}(f)\right].
\end{align}
It remains to show that for any $B\subset [N]$:
$$
\sum_{k\in B}\mu\left[\ent_{\mu_k}(f)\right]\leq |B|\,\mu\left[\ent_{\mu_B}(f)\right].
$$
However, this is immediate since $\mu\left[\ent_{\mu_k}(f)\right]\leq \mu\left[\ent_{\mu_B}(f)\right]$, for any $k\in B$. 
\end{proof}
%\begin{proof}[Proof of Proposition \ref{varshear}]
%\end{proof}

\bibliographystyle{plain}
\bibliography{ent}

\end{document}